\documentclass[12pt,reqno]{amsart}
\usepackage[numbers,sort&compress]{natbib}
\usepackage{amsfonts,bbm}
\usepackage{amssymb,color}
\usepackage{fancyhdr}
\usepackage[titletoc]{appendix}
\usepackage{enumitem}
\usepackage{amsgen}
\usepackage{amscd}
\usepackage{amsmath}
\usepackage{amsthm}
\usepackage{mathrsfs}
\usepackage{cases}
\usepackage{setspace}
\usepackage{accents}
\usepackage{verbatim}
\usepackage[colorlinks=true]{hyperref}
\hypersetup{urlcolor=blue,citecolor=green,linkcolor=blue,hypertexnames=false}
\usepackage[margin=3cm,a4paper]{geometry}

\newtheorem{thm}{Theorem}[section]

\newtheorem{lem}[thm]{Lemma}
\newtheorem{prop}[thm]{Proposition}
\newtheorem{defn}[thm]{\bfseries Definition}

\theoremstyle{remark}
\newtheorem{remark}[thm]{Remark}

\def\wt#1{\widetilde{#1}}

\newcommand{\R}{{\mathbb R}}
\newcommand{\C}{{\mathbb C}}

\newcommand{\Sch}{{\mathcal S}}

\def\d{\mathrm{\ d}}
\def\dx{\mathrm{\ d}x}

\newcommand{\re}{{\mathrm{Re}}}
\newcommand{\im}{{\mathrm{Im}}}

\numberwithin{equation}{section}

\begin{document}
	
	\title[ORBITAL STABILITY IN THE ENDPOINT CASE]{ORBITAL STABILITY OF SOLITARY WAVE SOLUTIONS OF THE DERIVATIVE NONLINEAR SCHR\"ODINGER EQUATION IN THE ENDPOINT CASE}
	\subjclass[2020]{35Q55, 35B35, 37K15}
	\keywords{Derivative nonlinear Schr\"odinger equation; orbital stability; endpoint solitary wave; B\"acklund transformation}
	\date{}
	
	\begin{abstract}\noindent
		In this paper, we consider the orbital stability of solitary waves for the derivative nonlinear Schr\"odinger equation
		\begin{equation*}
			iq_t+q_{xx}+i\partial_x\bigl(|q|^2q\bigr)=0,
			\quad (t,x)\in\R\times\R.
		\end{equation*}
		For the endpoint solitary waves satisfying $c>0$ and $c^2=4\omega$, we prove the orbital stability in $H^1(\R)$ without any additional assumptions. The proof uses conservation of mass and the second Hamiltonian $\mathcal{H}_2$ to keep the B\"acklund-transformed solution small in $H^1(\R)$ uniformly in time. We establish an inverse B\"acklund compactness result to reconstruct the algebraic solitary wave as the spectral zero approaches the negative real axis.
	\end{abstract}
	
	\author{Jia Shen}
	\address{(J. Shen) School of Mathematical Sciences and LPMC\\
		Nankai University\\
		Tianjin 300071, China}
	\email{shenjia@nankai.edu.cn}
	
	\author{Yifei Wu}
	\address{(Y. Wu) School of Mathematical Sciences\\
		Nanjing Normal  University\\
		Nanjing 210046, China}
	\email{yerfmath@gmail.com}
	
	\author{Changping Yang}
	\address{(C. Yang) Center for Applied Mathematics\\
		Tianjin University\\
		Tianjin 300072, China}
	\email{cp\_yang@tju.edu.cn}
	
	\maketitle
	
	\section{Introduction}
	\vspace{0.5cm}
	
	We consider the orbital stability of solitary wave solutions for the derivative nonlinear Schr\"odinger equation (DNLS)
	\begin{equation}\label{eq:dnls-equation}
		iq_t+q_{xx}+i\partial_x\bigl(|q|^2q\bigr)=0,
		\quad (t,x)\in\R\times\R.
	\end{equation}
	
	Before turning to solitary waves, we briefly recall the well-posedness theory for \eqref{eq:dnls-equation}. Hayashi and Ozawa \cite{HO-1992-PD} proved global existence in Sobolev spaces under an explicit smallness condition, and Takaoka \cite{Takaoka-1999-ADE} established local well-posedness in $H^{\frac{1}{2}}(\R)$. At the same regularity, Miao, Wu, and Xu \cite{MWX-2011-JDE} obtained global well-posedness under a mass restriction. Wu \cite{Wu-2013-APDE,Wu-2015-APDE} subsequently improved the mass threshold for global well-posedness in the energy space. The mass restriction was eventually removed by Bahouri and Perelman \cite{BP-2022-IM}, who proved global well-posedness for arbitrary initial data in $H^{\frac{1}{2}}(\R)$, and the result was later extended to the scaling-critical space $L^2(\R)$ by Harrop-Griffiths, Killip, Ntekoume, and Vi\c{s}an \cite{HGKNV-2026-JEMS}. A complementary global theory based on inverse scattering was developed in weighted Sobolev spaces by Jenkins, Liu, Perry, and Sulem. They first treated an open dense subset of $H^{2,2}(\R)$ containing initial data of arbitrarily large mass \cite{JLPS-2018-CPDE}, and then extended the result to arbitrary data in $H^{2,2}(\R)$ \cite{JLPS-2020-APDE}.
	
	A solitary wave with frequency $\omega>0$ and speed $c\in\R$ has the form
	\begin{equation}
		q_{\omega,c}(t,x)=e^{i\omega t}\phi_{\omega,c}(x-ct).
	\end{equation}
	Then $\phi_{\omega,c}$ satisfies
	\begin{equation}\label{eq:solitary-wave-profile-equation}
		-\phi_{\omega,c}''+\omega\phi_{\omega,c}+ic\phi_{\omega,c}'
		-i\partial_x\bigl(|\phi_{\omega,c}|^2\phi_{\omega,c}\bigr)=0.
	\end{equation}
	The solitary-wave profile is known explicitly; see~\cite{GW-1995-JDE,CO-2006-AIHP,KW-2018-JAM}. It can be written as
	\begin{equation*}
		\phi_{\omega,c}(x)=\varphi_{\omega,c}(x)
		\exp\left\{\frac{icx}{2}-\frac{3i}{4}\int_{-\infty}^{x}\varphi_{\omega,c}(y)^2\,dy\right\},
	\end{equation*}
	where $\varphi_{\omega,c}$ is the real-valued amplitude. When $c^2<4\omega$, we have
	\begin{equation}\label{eq:nonendpoint-amplitude-profile}
		\varphi_{\omega,c}(x)
		=\left[\frac{\sqrt\omega}{4\omega-c^2}
		\left\{\cosh\bigl(\sqrt{4\omega-c^2}\,x\bigr)-\frac{c}{2\sqrt\omega}\right\}\right]^{-1/2}.
	\end{equation}
	At the nontrivial endpoint $c=2\sqrt\omega>0$, \eqref{eq:nonendpoint-amplitude-profile} has the limit
	\begin{equation*}
		W_c(x):=\frac{2\sqrt c}{(1+c^2x^2)^{1/2}}.
	\end{equation*}
	The corresponding endpoint profile is
	\begin{equation}\label{eq:endpoint-complex-profile}
		\phi_{c^2/4,c}(x)
		=2i\sqrt c\,e^{icx/2}\frac{1-icx}{(1+icx)^2}.
	\end{equation}
	
	At the endpoint, the restriction $c>0$ is essential for \eqref{eq:dnls-equation}, rather than merely a choice of propagation direction. Indeed, the amplitude $\varphi=\varphi_{\omega,c}$ satisfies
	\begin{equation*}
		-\varphi''+\left(\omega-\frac{c^2}{4}\right)\varphi
		+\frac c2\varphi^3-\frac{3}{16}\varphi^5=0.
	\end{equation*}
	At $c^2=4\omega$, every localized solution satisfies
	\begin{equation*}
		(\varphi')^2=\frac1{16}\varphi^4(4c-\varphi^2).
	\end{equation*}
	Thus a nonzero localized endpoint profile exists only when $c>0$; for $c\leq0$, the only localized endpoint profile is the zero solution.
	
	\begin{defn}[Orbital stability]\label{defn:orbital-stability}
		Let $u(t,x)=e^{i\omega t}\Phi(x-ct)$ be a solitary-wave solution of \eqref{eq:dnls-equation}, with $\Phi\in H^1(\R)$, and define
		\begin{equation*}
			\mathcal O_\Phi:=\bigl\{e^{i\gamma}\Phi(\cdot-y):(\gamma,y)\in\R^2\bigr\}.
		\end{equation*}
		We say that $u(t,x)$ is orbitally stable in $H^1(\R)$ if, for every $\varepsilon>0$, there exists $\delta>0$ such that whenever $q_0\in H^1(\R)$ satisfies
		\begin{equation*}
			\operatorname{dist}_{H^1}(q_0,\mathcal O_\Phi)<\delta,
		\end{equation*}
		the corresponding solution $q$ exists globally and obeys
		\begin{equation*}
			\sup_{t\in\R}\operatorname{dist}_{H^1}\bigl(q(t),\mathcal O_\Phi\bigr)<\varepsilon.
		\end{equation*}
		Otherwise, $u(t,x)$ is said to be orbitally unstable.
	\end{defn}
	
	When $c^2<4\omega$, which we call the non-endpoint regime, the profile and its derivatives decay exponentially as $|x|\to\infty$. Writing $M(f):=\int_\R|f(x)|^2\,dx=\|f\|_{L^2}^2$, we have
	\begin{equation*}
		M(\phi_{\omega,c})=\int_{\R}|\phi_{\omega,c}|^2 \dx
		=8\tan^{-1}\left(\sqrt{\frac{2\sqrt\omega+c}{2\sqrt\omega-c}}\right)\in(0,4\pi).
	\end{equation*}
	Guo and Wu \cite{GW-1995-JDE} proved orbital stability for part of this parameter range, and Colin and Ohta \cite{CO-2006-AIHP} proved it throughout the range $c^2<4\omega$. After a standard gauge transformation, their variational argument uses the Lyapunov functional
	\begin{equation*}
		S_{\omega,c}(v):=E(v)+\frac{\omega}{2}M(v)+cP(v),
	\end{equation*}
	where $E$, $M$, and $P$ are the conserved energy, mass, and momentum, respectively. The gauged solitary-wave profile is a constrained minimizer of $S_{\omega,c}$, and coercivity of its second variation away from the phase and translation directions yields orbital stability.
	
	At the endpoint, however, this variational mechanism loses fixed-scale coercivity. Indeed, $W_c$ decays only as $|x|^{-1}$ and satisfies
	\begin{equation*}
		M(W_c)=\int_\R |W_c(x)|^2\,dx=4\pi,
	\end{equation*}
	for every $c>0$. The $L^2$-critical scaling $W_c(x)=c^{1/2}W_1(cx)$ preserves the mass, while the coefficient $\omega-c^2/4$ in the quadratic part of $S_{\omega,c}$ vanishes. Consequently, the conserved variational quantities do not separate different endpoint scales.
	
	This degeneration leads us to consider the integrable structure of DNLS. The DNLS equation admits the Kaup--Newell Lax pair \cite{KN-1978-JMP}, whose compatibility condition is \eqref{eq:dnls-equation}. The spectral data of its spatial part, in particular the reciprocal transmission coefficient and its discrete zeros, together with the associated B\"acklund transformations, provide the main tools of our proof.
	
	B\"acklund transformations have been used to prove stability results for several completely integrable equations; see, for example, \cite{MP-2012-IMRN,CPS-2016-CPDE,HW-2013-DIE,KT-2024-PMIHES}. For DNLS, Pelinovsky, Saalmann, and Shimabukuro \cite{PSS-2017-DPDE} used the Kaup--Newell B\"acklund transformation to remove interior zeros of the scattering coefficient, while Yang, Fan, and Liu \cite{YFL-2025-MZ} used the corresponding removal--reconstruction scheme to prove $L^2$ orbital stability of non-endpoint solitons. These arguments require an interior spectral point and do not cover the algebraic endpoint.
	
	For the algebraic endpoint, Klaus, Pelinovsky, and Rothos \cite{KPR-2006-JNS} analyzed the embedded spectral point, while Harrop-Griffiths, Killip, and Vi\c{s}an \cite{HGKV-2023-IMRN} showed that the transmission coefficient of the algebraic profile is identically one and hence does not determine its scale. Kwon and Wu \cite{KW-2018-JAM} proved conditional stability modulo scaling, phase, and translation, while Kim \cite{Kim-2024-JMAA} removed the non-smallness condition when the initial energy and momentum are nonpositive.
	
	The following is our main result.
	
	\begin{thm}\label{thm:main}
		Let $c>0$ and $c^2=4\omega$. Then $q_{\omega,c}$ is orbitally stable in $H^1(\R)$. 
	\end{thm}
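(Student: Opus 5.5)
We argue by contradiction through a compactness argument built on the Kaup--Newell spectral structure, following the route indicated in the abstract. Suppose the theorem fails. Then there exist $\varepsilon_0>0$, data $q_{0,n}\in H^1(\R)$ with $\operatorname{dist}_{H^1}(q_{0,n},\mathcal O_{\phi_{c^2/4,c}})\to0$, and times $t_n$ such that $\operatorname{dist}_{H^1}(q_n(t_n),\mathcal O_{\phi_{c^2/4,c}})\ge\varepsilon_0$. Global existence is automatic from \cite{BP-2022-IM}. By scaling we may take $c=1$. The algebraic profile $\phi:=\phi_{1/4,1}$ has transmission coefficient identically one \cite{HGKV-2023-IMRN}, so its only spectral feature is the embedded point at the threshold, on the negative real axis in the spectral variable $\zeta^2$. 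A small $H^1$ perturbation $q_{0,n}$ will generically have a zero $\zeta_n$ of the reciprocal transmission coefficient $a(\zeta)$ near this threshold. We first show, by a perturbative Jost-function analysis near the endpoint, that for $n$ large either such a zero exists with $\zeta_n^2\to$ the threshold, or the data are close to the orbit of $\phi$ in a way that forces a contradiction directly. Since $a(\zeta)$ is conserved by the flow, $\zeta_n$ is time independent.

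Step two is removal. Apply the Kaup--Newell B\"acklund transformation $\mathcal B_{\zeta_n}$ that deletes the zero $\zeta_n$, giving $\tilde q_n(t)=\mathcal B_{\zeta_n}q_n(t)$, which again solves DNLS. Near $\phi$ this is a map onto a neighborhood of $0$, so $\|\tilde q_{0,n}\|_{H^1}\to0$. To propagate smallness uniformly in time we use conservation of mass $M$ together with the second Hamiltonian $\mathcal H_2$ (the conserved quantity at the $H^1$ level). For small data, $\mathcal H_2(\tilde q)+C M(\tilde q)$ controls $\|\tilde q\|_{H^1}^2$, because the quartic and higher terms are perturbative while the mass is small. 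Hence $\sup_t\|\tilde q_n(t)\|_{H^1}\to0$. Here the endpoint degeneracy is harmless: $\tilde q_n$ is near zero, where coercivity holds trivially, and the scale information has been moved into the single parameter $\zeta_n$.

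Step three, which we expect to be the main obstacle, is an inverse B\"acklund compactness statement. Let $\zeta_n$ approach the negative real axis, and let $\tilde q_n\to0$ in $H^1$ uniformly in time. We must show that the reconstructed $q_n(t)=\mathcal B_{\zeta_n}^{-1}\tilde q_n(t)$ is $H^1$-close to $e^{i\gamma_n(t)}\phi(\cdot-y_n(t))$. The difficulty is that the natural reconstruction with zero background yields a non-endpoint soliton $\phi_{\omega_n,c_n}$ with $c_n^2<4\omega_n$, and the map degenerates as $\zeta_n$ tends to the axis. The Darboux factor involves Jost solutions evaluated at a point that becomes embedded, and these lose exponential decay. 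Our first attempt is to write $\mathcal B^{-1}_{\zeta_n}\tilde q$ explicitly in terms of the $2\times2$ Jost matrix of $\tilde q$ at $\zeta_n$. We would then prove uniform-in-$n$ $H^1$ bounds on the difference between this reconstruction and that of the zero potential, using Volterra estimates weighted by $\langle x\rangle$. These can be closed because the Jost solutions of small $H^1$ potentials are uniformly bounded even at the threshold. The key fact is that the algebraic $|x|^{-1}$ tail of $W$ stays $L^2$ but only barely, so we need an $L^2$ bound on the tail of the correction that is uniform as $\zeta_n^2$ tends to the threshold. We plan to obtain it by splitting $|x|\lesssim R$ and $|x|\gg R$, using the smallness of $\tilde q$ on the outer region.

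Finally, we need the convergence $\phi_{\omega_n,c_n}\to\phi$ in $H^1$ up to symmetries. This follows from the explicit formulas \eqref{eq:nonendpoint-amplitude-profile}, since $\zeta_n$ determines $(\omega_n,c_n)$ and $(\omega_n,c_n)\to(1/4,1)$. Dominated convergence applies because of the $\cosh$ form, and mass continuity gives $M(\phi_{\omega_n,c_n})\to4\pi$. Combining these gives $\operatorname{dist}_{H^1}(q_n(t_n),\mathcal O_\phi)\to0$, contradicting the choice of $t_n$.
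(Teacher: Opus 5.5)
Your architecture matches the paper's: select a zero, remove it by B\"acklund, propagate smallness of the removed field with $M$ and $\mathcal H_2$, then reconstruct. Your Step two is essentially its small-mass coercivity argument. The gaps are in the steps that carry the real content: they are asserted rather than proved, and the tools you name for them do not work here.

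\textbf{(i) Existence of the zero.} The zero is not there merely ``generically''; it must exist for every large $n$. Your alternative branch has no mechanism: closeness of the data to the orbit is the hypothesis, and by itself it says nothing about time $t_n$. The paper obtains exactly one zero $z_n$ in $\{\pi/2<\arg k<\pi\}$ from the mass--zeros--winding identity. This uses $M(f_n)\to4\pi$ and $\sup_{\kappa>0}|a(i\kappa;f_n)-1|\to0$, where the latter follows from an $L^2$-Lipschitz bound for $a$ on $i\R^+$ together with $a(\cdot;Q_\rho)\equiv1$.

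\textbf{(ii) Smallness of the removed field and the scale.} ``Near $\phi$ this is a map onto a neighborhood of $0$'' is exactly what must be proved. It is not a continuity statement, since $\phi$ has no zero to remove. The paper first shows that all limit points of $z_n$ lie on $(-\infty,0)$. Boundedness comes from the zero-free region. The point $0$ is excluded by a local mass count using $\|S\|_\infty\leq4\sqrt{|z|}$, and interior points are excluded by continuity of $a$. It then gets $\mathbf B_{z_n}(f_n)\to0$ in $H^1$ from three facts: the mass shift $M(\mathbf B_zf)=M(f)-4\arg z$, the Hamiltonian shift $\mathcal H_2(\mathbf B_zf)=\mathcal H_2(f)-8\im(z^2)$, and $\mathcal H_2(Q_\rho)=0$.

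A perturbative Jost analysis also cannot give $\zeta_n^2\to-\rho$. First, $Q_\rho\notin L^1$. Second, $a(\cdot;Q_{\wt\rho})\equiv1$ for every $\wt\rho>0$, so the spectral data of the profile do not see the scale at all. The paper identifies the limit only a posteriori: it applies its inverse compactness result at $t=0$ and uses that $\mathcal O_\alpha$ and $\mathcal O_\beta$ are $H^1$-separated for $\alpha\neq\beta$.

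\textbf{(iii) The inverse step.} Your plan rests on uniform control of Jost solutions of small $H^1$ potentials as $\im(\zeta_n^2)\to0$, and you do not have this. Small $H^1$ norm gives no $L^1$ control. At $z_n=\zeta_n^2$ the natural $L^2$-based Volterra bound is in terms of $|z_n|\,\|\tilde q_n\|_{L^2}^2/\im(z_n)$. By the mass shift identity, $\|\tilde q_n\|_{L^2}^2\geq4(\pi-\arg z_n)\geq4\im(z_n)/|z_n|$ whenever $M(q_{0,n})\geq4\pi$, so this quantity is bounded below by $4$, not small. Weights $\langle x\rangle$ are not propagated by the DNLS flow, so they cannot be used uniformly in $t$. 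Smallness of $\tilde q_n$ on an outer region also does not control the barely-$L^2$ tail of the reconstruction; only exact mass bookkeeping does.

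The paper avoids Jost functions entirely. It writes $f_n=\bar G_n^2r_n-\bar G_nS_n$ and uses only the B\"acklund identities. These yield the first-order equations $\theta_n'=4|z_n|(\cos\theta_n-\cos\varphi_n)+e_n$ and $S_n'=-2i|z_n|\bar G_nS_n+\mathcal R_n$, with $e_n,\mathcal R_n$ small. After centering, Gronwall gives local convergence to the explicit endpoint data $(\theta_*,S_*)$. The mass identity $\|S_n\|_{L^2}^2\to4\pi=\|S_*\|_{L^2}^2$ then upgrades weak to strong convergence, which yields $H^1$ convergence of $f_n$ to the orbit $\mathcal O_{\wt\rho}$.

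\textbf{Approximation.} The B\"acklund transform and Darboux covariance are only available for Schwartz solutions. You therefore also need the Schwartz approximation step on $[-|t_n|-1,|t_n|+1]$, which your proposal omits.
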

	
	\begin{remark}
		Theorem~\ref{thm:main} extends the $H^1(\R)$ orbital stability result of Colin and Ohta \cite{CO-2006-AIHP} from the exponentially decaying solitary waves with $c^2<4\omega$ to the algebraically decaying solitary waves with $c^2=4\omega$ and $c>0$. The result requires neither the non-smallness condition on the $L^6(\R)$ norm of the solution in \cite{KW-2018-JAM} nor the sign restrictions on the initial energy and momentum in \cite{Kim-2024-JMAA}. Every initial datum sufficiently close to the endpoint solitary-wave orbit in $H^1(\R)$ generates a global solution that remains close to this orbit for all time.
	\end{remark}
	
	\begin{remark}[Reflection]
		Let $\wt{q}(t,x):=q(t,-x)$. Then $\wt{q}$ solves
		\begin{equation}\label{eq:dnls-reflected}
			i\wt{q}_t+\wt{q}_{xx}-i\partial_x\bigl(|\wt{q}|^2\wt{q}\bigr)=0.
		\end{equation}
		Define
		\begin{equation*}
			\widetilde q_{\omega,c}(t,x):=q_{\omega,-c}(t,-x).
		\end{equation*}
		Then $\widetilde q_{\omega,c}$ is an endpoint solitary wave of speed $c$ for \eqref{eq:dnls-reflected}. Since spatial reflection is an isometry on $H^1(\R)$, Theorem~\ref{thm:main} yields orbital stability of $\widetilde q_{\omega,c}$ in $H^1(\R)$ for \eqref{eq:dnls-reflected} when $c^2=4\omega$ and $c<0$.
	\end{remark}
	
	\subsection{Outline of the proof of Theorem \ref{thm:main}}
	
	Our proof first establishes orbital stability for Schwartz initial data and then extends the result to $H^1(\R)$ by approximation. For Schwartz data, we use the integrable structure of DNLS to select a spectral zero near the algebraic endpoint. As the initial data approach the algebraic profile in $H^1(\R)$, their perturbation determinants converge uniformly to one on the positive imaginary axis. Together with the convergence of the mass to $4\pi$, the mass--zeros--winding identity yields exactly one zero in the second quadrant. We also show that this zero remains bounded and stays away from the origin, and that its possible limits lie on the negative real axis.
	
	We remove the selected zero by a B\"acklund transformation. The mass shift identity shows that the transformed function tends to zero in $L^2(\R)$. Since the second Hamiltonian vanishes on the algebraic profile, the Hamiltonian shift identity also shows that the second Hamiltonian of the transformed function tends to zero. For functions with sufficiently small mass, this Hamiltonian controls the $L^2(\R)$ norm of the spatial derivative. We therefore obtain convergence of the transformed function to zero in $H^1(\R)$.
	
	Next, we study the inverse B\"acklund transformation to reconstruct the original function. The removal--reconstruction arguments for non-endpoint solitons work at an interior spectral point. In the endpoint limit considered here, the selected zero approaches the negative real axis, while the determinant of the limiting algebraic profile is identically one. We prove an inverse compactness result for this boundary limit: if the transformed functions tend to zero in $H^1(\R)$ and the selected zeros approach a nonzero point on the negative real axis, then the original functions approach the corresponding algebraic-soliton orbit in $H^1(\R)$. We first obtain local convergence of the reconstruction data from their first-order equations, and then use the mass identity and these equations to obtain global $H^1$ convergence. Applying this result to the initial data and using the positive separation of orbits at different scales identifies the limiting spectral point with the scale of the original profile. This step allows us to retain the fixed scale rather than introduce scaling modulation.
	
	We then apply these results along the DNLS flow. The perturbation determinant is preserved, so the selected zero does not change with time. By Darboux covariance, the transformed function also solves DNLS. Conservation of its mass and second Hamiltonian, together with the small-mass estimate above, keeps it small in $H^1(\R)$ uniformly in time. Applying the inverse compactness result to sequences of arbitrary times then proves orbital stability for Schwartz initial data.
	
	Finally, we pass to general $H^1(\R)$ initial data by Schwartz approximation and continuous dependence on finite time intervals. If stability failed for a sequence of $H^1(\R)$ initial data approaching the solitary-wave orbit, we could approximate each datum by a Schwartz function closely enough that the corresponding solutions remain close up to the chosen time of departure from the orbit. The approximating data still approach the same orbit, so the Schwartz stability result gives a contradiction. This proves the $H^1(\R)$ orbital stability stated in Theorem \ref{thm:main}.
	
	\subsection{Organization of the paper}
	The remainder of the paper is organized as follows. Section~\ref{sec:preliminaries} is devoted to the spectral theory, the B\"acklund transformation, and well-posedness, which will be used later. Section~\ref{sec:spectral-analysis} establishes the spectral-zero selection and the endpoint inverse B\"acklund compactness. Section~\ref{sec:proof-main-theorem} proves Theorem~\ref{thm:main}.
	
	\vspace{1cm}
	\section{Preliminaries}\label{sec:preliminaries}
	\vspace{0.5cm}

	In the following, we record some notation and useful lemmas. Throughout, $\operatorname{Log}z=\log|z|+i\arg z$ denotes the principal logarithm on $\C\setminus(-\infty,0]$, with $\arg z\in(-\pi,\pi)$. We denote the real part of $z$ by $\re \ z$ and the imaginary part by $\im \ z$.
	
	\subsection{The perturbation determinant}
	
	Let
	\begin{equation*}
		\sigma_3:=\begin{pmatrix}1&0\\0&-1\end{pmatrix},
		\quad \mathcal P(q):=\begin{pmatrix}0&q\\-\bar q&0\end{pmatrix}.
	\end{equation*}
	Kaup and Newell \cite{KN-1978-JMP} associated the following Lax pair with \eqref{eq:dnls-equation}:
	\begin{equation}\label{eq:kaup-newell-lax-pair}
		\Psi_x=U(q,\lambda)\Psi,\quad \Psi_t=V(q,\lambda)\Psi,
	\end{equation}
	where
	\begin{equation*}
		\begin{aligned}
			U(q,\lambda)
			&=-i\lambda^2\sigma_3+\lambda\mathcal P(q),\\
			V(q,\lambda)
			&=-2i\lambda^4\sigma_3+2\lambda^3\mathcal P(q)
			+i\lambda^2|q|^2\sigma_3-\lambda|q|^2\mathcal P(q)
			+i\lambda\sigma_3\mathcal P(q_x).
		\end{aligned}
	\end{equation*}
	Its compatibility condition is
	\begin{equation*}
		U_t-V_x+[U,V]=0
		\quad\Longleftrightarrow\quad
		iq_t+q_{xx}+i\partial_x(|q|^2q)=0.
	\end{equation*}
	Thus DNLS is the isospectral evolution of the spatial Kaup--Newell problem, and the coefficient $a$ defined below is preserved by the DNLS flow.
	
	For $f\in\Sch(\R)$ and $\lambda^2\in\R$, let $\Psi_-$ solve the spatial equation in \eqref{eq:kaup-newell-lax-pair} with
	\begin{equation*}
		\lim_{x\to-\infty}\Psi_-(x;\lambda)e^{i\lambda^2x\sigma_3}=I.
	\end{equation*}
	Then we define the scattering matrix
	\begin{equation*}
		\mathsf S(\lambda;f)
		:=\lim_{x\to+\infty}e^{i\lambda^2x\sigma_3}\Psi_-(x;\lambda)
		=
		\begin{pmatrix}
			a_0(\lambda;f)&-\overline{b_0(\bar\lambda;f)}\\
			b_0(\lambda;f)&\overline{a_0(\bar\lambda;f)}
		\end{pmatrix}.
	\end{equation*}
	We call $a_0(\lambda;f)$ the reciprocal transmission coefficient. Set $\C_{\mathrm I}:=\{\lambda\in\C:\re\lambda>0,\ \im\lambda>0\}$. Then $a_0(\lambda;f)$ admits a natural extension for $\lambda\in \C_{\mathrm I}$.
	
	For $k\in \C^+:=\{z\in \C: \im \ z>0\}$ and $f\in L^2(\R)$, we define
	\begin{equation}\label{eq:perturbation-determinant}
		a(k;f):=\det\bigl[1-k\Lambda(k;f)\Gamma(k;f)\bigr],
	\end{equation}
	where
	\begin{equation}\label{eq:resolvent-factor-operators}
		\begin{aligned}
			\Lambda(k;f)&:=(-ik-\partial_x)^{-1/2}M_f(-ik+\partial_x)^{-1/2},\\
			\Gamma(k;f)&:=(-ik+\partial_x)^{-1/2}M_{\bar f}(-ik-\partial_x)^{-1/2}.
		\end{aligned}
	\end{equation}
	Here $M_f$ denotes multiplication by $f$, and the fractional powers are defined by the principal holomorphic branches of the Fourier multipliers. We use $\sqrt{k}:=|k|^{1/2}e^{i\arg k/2}$ with $0<\arg k<\pi$. We write $\mathfrak I_p$ for the Schatten $p$-class; in particular, $\mathfrak I_2$ is the Hilbert--Schmidt class and $\mathfrak I_1$ is the trace class. Moreover, we have the following inequality
	\begin{equation}\label{inequality-schatten}
		\|T_1T_2\|_{\mathfrak I_p}\lesssim \|T_1\|_{\mathfrak I_{p_1}} \|T_2\|_{\mathfrak I_{p_2}}
	\end{equation}
	for $\frac{1}{p}=\frac{1}{p_1}+\frac{1}{p_2}$, when $p,\ p_1,\ p_2\geq 1$.
	
	By the Jost-Pais identity \cite{HGKV-2023-IMRN}, for $k\in \C^+$ and $q\in\mathcal{S}(\R)$, we have
	\begin{equation}
		a(k;q)=a_0(\sqrt{k};q).
	\end{equation}
	
	We collect the properties of $a(k;f)$ used below; see \cite{HGKV-2023-IMRN}.
	
	\begin{lem}[Basic properties of the perturbation determinant]\label{lem:perturbation-determinant}
		The following statements hold:
		\begin{enumerate}[label=\textup{(\roman*)}]
			\item Let $0\in I\subset\R$ be an interval. If $q\in C^1(I;\Sch(\R))$ is a solution of \eqref{eq:dnls-equation}, then
			\begin{equation*}
				a(k;q(t))=a(k;q(0)),\quad t\in I,\quad k\in\C^+.
			\end{equation*}
			
			\item For every $f\in L^2(\R)$ and $k\in\C^+$, $\Lambda(k;f),\Gamma(k;f)\in\mathfrak I_2$ and $k\Lambda(k;f)\Gamma(k;f)\in\mathfrak I_1$. Moreover,
			\begin{equation}\label{eq:hilbert-schmidt-ray-bounds}
				\sqrt{\im k}\,
				\bigl(
				\|\Lambda(k;f)\|_{\mathfrak I_2}
				+\|\Gamma(k;f)\|_{\mathfrak I_2}
				\bigr)
				\lesssim\|f\|_{L^2}.
			\end{equation}
			
			\item Let $f\in L^2(\R)$. For $z\in\C^+$, $a(z;f)=0$ if and only if the spatial equation in \eqref{eq:kaup-newell-lax-pair} at $\lambda=\sqrt z$ admits a nonzero solution
			\begin{equation*}
				\psi\in H^1(\R;\C^2).
			\end{equation*}
			Such a solution is unique up to multiplication by a nonzero constant and is nowhere vanishing.
			
			\item Let $\mathcal F$ be bounded in $L^2(\R)$ and satisfy
			\begin{equation*}
				\lim_{h\to0}\sup_{g\in\mathcal F}
				\|g(\cdot+h)-g\|_{L^2}=0.
			\end{equation*}
			Set $M(\mathcal F):=\sup_{g\in\mathcal F}M(g)$. For every sufficiently small $\varepsilon>0$, there exists $N=N(\varepsilon,\mathcal F)>0$ such that every zero $z\in\C^+$ of $a(\cdot;g)$, $g\in\mathcal F$, satisfies
			\begin{equation}\label{eq:zero-free-region-estimate}
				|z|\leq\frac{N}{\varepsilon^2}
				\quad\text{or}\quad
				\arg z\leq C_{M(\mathcal F)}\varepsilon.
			\end{equation}
			
			\item For every $f\in L^2(\R)$, along the positive imaginary axis,
			\begin{equation}\label{eq:determinant-ray-endpoint-limits}
				\lim_{\kappa\downarrow0}a(i\kappa;f)=1,
				\quad
				\lim_{\kappa\to\infty}a(i\kappa;f)=e^{-iM(f)/2},
				\quad M(f):=\|f\|_{L^2}^2.
			\end{equation}
			Moreover, for $k\in\C^+$,
			\begin{equation}\label{a=1-qa}
				a(k;q_a)\equiv1 \quad \text{with}\quad q_a(x)=2e^{ix/2}\frac{1-ix}{(1+ix)^2}.
			\end{equation}
		\end{enumerate}
	\end{lem}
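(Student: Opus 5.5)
This lemma collects known facts from \cite{HGKV-2023-IMRN} and \cite{KN-1978-JMP}. The plan is to verify each item from the operator definition \eqref{eq:perturbation-determinant}, using the Jost--Pais identity $a(k;q)=a_0(\sqrt k;q)$ to pass between the Fredholm and Jost-function pictures where that is convenient.

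For (ii), I write $\Lambda(k;f)$ as an integral operator conjugated by Fourier multipliers. Its Hilbert--Schmidt norm is
\begin{equation*}
\|\Lambda\|_{\mathfrak I_2}^2=\frac{1}{2\pi}\iint\frac{|\hat f(\xi-\eta)|^2}{|\xi-k|\,|\eta-k|}\,d\xi\,d\eta
\end{equation*}
up to harmless sign conventions. This is bounded by $\|f\|_{L^2}^2$ times $\sup_\zeta\int|\xi-k|^{-1}|\xi+\zeta-k|^{-1}d\xi$. That supremum is not finite, so the direct bound fails. I will instead use the standard estimate $\|(\kappa\mp\partial)^{-1/2}M_f(\kappa\pm\partial)^{-1/2}\|_{\mathfrak I_2}^2=\int\frac{|\hat f(\xi)|^2}{\sqrt{4\kappa^2+\xi^2}}\,d\xi$. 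This can be computed exactly for $k=i\kappa$ and is at most $\|f\|^2/(2\kappa)$. Comparing multipliers then gives the $\sqrt{\im k}$ bound for general $k$. The product of two $\mathfrak I_2$ operators is trace class by \eqref{inequality-schatten}, so the determinant is well defined and analytic in $k$.

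For (i), DNLS is the compatibility condition of the Lax pair \eqref{eq:kaup-newell-lax-pair}. Hence for Schwartz solutions the scattering matrix evolves by conjugation with $e^{-2i\lambda^4t\sigma_3}$, and $a_0$ is invariant. Jost--Pais transfers this to $a(k;q(t))$.

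For (iii), $a(z)=0$ means $1-z\Lambda\Gamma$ has a kernel. Undoing the square-root factorizations turns a kernel vector into an $H^1$ solution of the spatial problem at $\lambda=\sqrt z$, and conversely. Uniqueness follows from constancy of the Wronskian: two decaying solutions have zero Wronskian, hence are proportional. A solution cannot vanish at a point, since ODE uniqueness would make it identically zero.

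For (v), the limit $\kappa\downarrow0$ follows from $k\Lambda\Gamma\to0$ in $\mathfrak I_1$. For small $\kappa$ the factor $\kappa$ beats $\|\Lambda\|\|\Gamma\|\lesssim\kappa^{-1}\|f\|^2$ only after dominated convergence on the low frequencies of $\hat f$. The limit $\kappa\to\infty$ requires the renormalized expansion $\log a=-\sum\frac{k^n}{n}\operatorname{tr}(\Lambda\Gamma)^n$ and is the main obstacle. I would prove it first for Schwartz $f$ via the Jost function, where the gauge factor $e^{-\frac{i}{2}\int|f|^2}$ appears explicitly as $\lambda\to\infty$. I would then extend to $L^2$ by continuity of $a$ in $f$, uniformly on the ray, using equicontinuity estimates from (ii). Identity \eqref{a=1-qa} is checked by explicitly solving the Kaup--Newell system for the rational potential $q_a$ and computing $a_0$.

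For (iv), the zero-free region comes from the bound $\|k\Lambda\Gamma\|_{\mathfrak I_2}<1$ when $|k|$ is large and $\arg k$ is not small. The high-frequency part of $\hat g$ is uniformly small over $\mathcal F$ by the $L^2$-equicontinuity hypothesis, while the low-frequency part contributes $O(\arg k\cdot M)$. This yields \eqref{eq:zero-free-region-estimate} with $N$ determined by the equicontinuity modulus of $\mathcal F$.
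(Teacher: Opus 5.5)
The paper does not prove this lemma; it quotes all five items from \cite{HGKV-2023-IMRN}. Your sketches of (i), (iii), and the Schwartz-plus-density route to the limit $\kappa\to\infty$ in (v) are reasonable. \textbf{Your argument for (iv), however, fails, and it fails exactly where the paper uses (iv).}

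The set you must show zero-free, $\{|k|>N\varepsilon^{-2},\ \arg k>C\varepsilon\}$, reaches up to the negative real axis. Proposition~\ref{prop:preclassification} needs precisely that part, because a priori the second-quadrant zeros $z_n$ could escape to infinity along $(-\infty,0)$. There the bound $\|k\Lambda(k;g)\Gamma(k;g)\|_{\mathfrak I_2}<1$ is false, even for a single $g\in\Sch(\R)$ with $\hat g$ supported in $[-1,1]$:
\begin{itemize}
\item Fix $\re k<0$ with $|\re k|$ large, and let $\im k\downarrow0$.
\item The symbol of $(-ik-\partial_x)^{-1/2}$ has modulus $\bigl((\xi+\re k)^2+(\im k)^2\bigr)^{-1/4}$, which is resonant at $\xi=|\re k|$. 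At that frequency the symbol of $(-ik+\partial_x)^{-1}$ is approximately $-i/(2|\re k|)$.
\item So on that frequency window $k\Lambda\Gamma\approx\frac i2(-ik-\partial_x)^{-1/2}M_{|g|^2}(-ik-\partial_x)^{-1/2}$. Its Hilbert--Schmidt norm, and in fact its operator norm, grows like $M(g)\log(1/\im k)$.
\end{itemize}
Thus the low-frequency part of $g$ does not contribute $O(\arg k\cdot M)$; near the negative axis it is not perturbative at all. The bound $|k|\,\|\Lambda\|_{\mathfrak I_2}\|\Gamma\|_{\mathfrak I_2}\lesssim M/\sin\arg k$ from (ii) degenerates there as well.

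What excludes zeros in this region is structure, not smallness. After conjugating by $(-ik-\partial_x)^{1/2}$ and modulating by $e^{i|\re k|x}$, the leading part becomes $1-\frac i2(\im k-\partial_x)^{-1}M_{|g|^2}$. This operator is invertible because $\im k-\frac i2|g|^2-\partial_x$ is. The low-frequency part only produces the pure phase $e^{-\frac i2\int^x|g|^2}$, the same mechanism behind the limit $e^{-iM/2}$ in (v). A proof of (iv) has to remove this piece, either by such a conjugation or by working directly with the eigenfunction from (iii). Only the part of $g$ that couples the two resonances, essentially its Fourier mass near frequency $-2\re k$, can be treated perturbatively, and equicontinuity makes that part small.

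Two smaller points.
\begin{itemize}
\item In (ii), the supremum you call infinite is finite. By Cauchy--Schwarz, $\int|\xi-k|^{-1}|\xi+\zeta-k|^{-1}\,d\xi\le\int|\xi-k|^{-2}\,d\xi=\pi/\im k$ for every $\zeta$. So the direct bound you discarded already gives \eqref{eq:hilbert-schmidt-ray-bounds} for all $k\in\C^+$. The ``exact'' formula you use instead is missing a factor comparable to $\log(4+\xi^2/\kappa^2)$, although your final inequality survives.
\item In (v), $q_a$ decays only like $|x|^{-1}$ and is not in $L^1$. The Jost--Pais identity is stated for Schwartz data, so it does not by itself let you compute $a(k;q_a)$ from Jost solutions; you would need to extend it to this potential. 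A cleaner route: $q_a$ is the $L^2$-limit of the phase-rotated profiles $\phi_{\omega,1}$ as $\omega\downarrow\frac14$. Their B\"acklund transform vanishes, so their determinants equal $\frac{\bar z}{z}\frac{k-z}{k-\bar z}$ by \eqref{eq:backlund-determinant-shift}. This tends to $1$ as $z$ approaches the negative axis, and Lemma~\ref{lem:determinant-continuity} then gives $a(k;q_a)\equiv1$.
\end{itemize}
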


	\begin{lem}\label{lem:determinant-continuity}
		If $(k_n,f_n)\to(k,f)$ in $\C^+\times L^2(\R)$ as $n\to\infty$, then
		\begin{equation}\label{eq:determinant-L2-continuity}
			a(k_n;f_n)\to a(k;f) \quad \text{as}\quad n\to \infty.
		\end{equation}
	\end{lem}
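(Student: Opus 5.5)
The plan is to prove Lemma~\ref{lem:determinant-continuity} by combining the continuity of the Fredholm determinant on the trace class with the bilinear dependence of $k\Lambda(k;f)\Gamma(k;f)$ on $(f,\bar f)$. The standard estimate
\begin{equation*}
	|\det(1+A)-\det(1+B)|\le\|A-B\|_{\mathfrak I_1}\exp\bigl(\|A\|_{\mathfrak I_1}+\|B\|_{\mathfrak I_1}+1\bigr),\quad A,B\in\mathfrak I_1,
\end{equation*}
reduces \eqref{eq:determinant-L2-continuity} to showing
\begin{equation*}
	k_n\Lambda(k_n;f_n)\Gamma(k_n;f_n)\to k\Lambda(k;f)\Gamma(k;f)\quad\text{in }\mathfrak I_1,
\end{equation*}
together with a uniform bound on the trace norms. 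The uniform bound follows from \eqref{inequality-schatten} with $p_1=p_2=2$ and from \eqref{eq:hilbert-schmidt-ray-bounds}: since $\im k_n\to\im k>0$, the quantities $\im k_n$ are bounded below and $\|f_n\|_{L^2}$ is bounded.

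For the convergence, first split
\begin{equation*}
	k_n\Lambda_n\Gamma_n-k\Lambda\Gamma=(k_n-k)\Lambda_n\Gamma_n+k(\Lambda_n-\Lambda)\Gamma_n+k\Lambda(\Gamma_n-\Gamma),
\end{equation*}
where $\Lambda_n=\Lambda(k_n;f_n)$ and so on. Applying \eqref{inequality-schatten} again, it suffices to prove $\Lambda(k_n;f_n)\to\Lambda(k;f)$ and $\Gamma(k_n;f_n)\to\Gamma(k;f)$ in $\mathfrak I_2$.

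By linearity in $f$, write $\Lambda(k_n;f_n)-\Lambda(k;f)=\Lambda(k_n;f_n-f)+[\Lambda(k_n;f)-\Lambda(k;f)]$. The first term is bounded by $(\im k_n)^{-1/2}\|f_n-f\|_{L^2}\to0$ by \eqref{eq:hilbert-schmidt-ray-bounds}. The second term is where genuine dependence on $k$ enters, and I expect it to be the main obstacle.

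For the second term, I would compute the Hilbert--Schmidt norm on the Fourier side. The operator $\Lambda(k;f)$ has kernel
\begin{equation*}
	\frac{1}{\sqrt{2\pi}}\,(-ik-i\xi)^{-1/2}\hat f(\xi-\eta)(-ik+i\eta)^{-1/2}
\end{equation*}
in the variables $(\xi,\eta)$. Therefore
\begin{equation*}
	\|\Lambda(k_n;f)-\Lambda(k;f)\|_{\mathfrak I_2}^2=\frac{1}{2\pi}\iint|\hat f(\xi-\eta)|^2\,|m_{k_n}(\xi)\tilde m_{k_n}(\eta)-m_k(\xi)\tilde m_k(\eta)|^2\,d\xi\,d\eta,
\end{equation*}
where $m_k(\xi)=(-ik-i\xi)^{-1/2}$ and $\tilde m_k(\eta)=(-ik+i\eta)^{-1/2}$. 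The real parts of $-ik\mp i\xi$ equal $\im k>0$, so the principal branches are holomorphic and continuous in $k$. This gives pointwise convergence of the integrand.

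For domination, use $|m_{k'}(\xi)|\lesssim(\im k/2+|\xi+\re k'|)^{-1/2}\lesssim\langle\xi\rangle^{-1/2}$ uniformly for $k'$ near $k$, and similarly for $\tilde m_{k'}$. The integrand is then dominated by a constant multiple of $|\hat f(\xi-\eta)|^2\langle\xi\rangle^{-1}\langle\eta\rangle^{-1}$. This function is integrable because, after the change of variables $\zeta=\xi-\eta$, the inner integral $\int\langle\xi\rangle^{-1}\langle\xi-\zeta\rangle^{-1}d\xi$ diverges logarithmically.

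So direct domination fails, and I will fix this by density instead. Approximate $f$ in $L^2$ by $g$ with $\hat g$ compactly supported. The uniform bound \eqref{eq:hilbert-schmidt-ray-bounds} controls $\Lambda(k';f-g)$ by $\|f-g\|_{L^2}$ uniformly in $k'$ near $k$, since $\im k'$ is bounded below. For $g$, the kernel difference is bounded by $|\hat g(\zeta)|^2$ times
\begin{equation*}
	\int|m_{k_n}(\xi)\tilde m_{k_n}(\xi-\zeta)-m_k(\xi)\tilde m_k(\xi-\zeta)|^2\,d\xi .
\end{equation*}
The mean value theorem in $k$ gives a bound $\lesssim|k_n-k|^2\langle\xi\rangle^{-3}$ for bounded $\zeta$, which is integrable, so this quantity tends to zero.

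The same argument applies to $\Gamma$. A $3\varepsilon$ argument then concludes the proof.
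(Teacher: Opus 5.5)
Your argument is correct and is essentially the paper's proof. It uses the same three-term splitting of $k_n\Lambda_n\Gamma_n-k\Lambda\Gamma$, the same reduction $\Lambda(k_n;f_n)-\Lambda(k;f)=\Lambda(k_n;f_n-f)+[\Lambda(k_n;f)-\Lambda(k;f)]$ with \eqref{eq:hilbert-schmidt-ray-bounds} for the first piece, and the same Fredholm-determinant continuity inequality.

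The one place you depart is the $k$-continuity of $\Lambda(k;f)$ in $\mathfrak I_2$, and your stated reason for departing is wrong. The integral $\int_\R\langle\xi\rangle^{-1}\langle\xi-\zeta\rangle^{-1}\,d\xi$ does not diverge: the integrand decays like $|\xi|^{-2}$, and by Cauchy--Schwarz the integral is at most $\int_\R\langle\xi\rangle^{-2}\,d\xi=\pi$, uniformly in $\zeta$. Hence $|\hat f(\xi-\eta)|^2\langle\xi\rangle^{-1}\langle\eta\rangle^{-1}$ is integrable on $\R^2$, with integral at most $\pi\|f\|_{L^2}^2$. So direct dominated convergence works, which is exactly what the paper does. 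Your claim also contradicts your own setup. Since $|m_k(\xi)\tilde m_k(\eta)|^2$ is comparable to $\langle\xi\rangle^{-1}\langle\eta\rangle^{-1}$ for fixed $k$, a divergence would force $\Lambda(k;f)\notin\mathfrak I_2$, contradicting \eqref{eq:hilbert-schmidt-ray-bounds}, which you invoke.

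The density argument you substitute is nevertheless valid, so the proof stands; it is just longer than needed. That argument approximates $f$ by $g$ with compactly supported $\hat g$, uses the uniform Hilbert--Schmidt bound for $f-g$, and applies the mean value theorem in $k$ for $g$. The mean-value bound for bounded $\zeta$ is in fact $\lesssim|k_n-k|^2\langle\xi\rangle^{-4}$, but your $\langle\xi\rangle^{-3}$ suffices.
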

	
	\begin{proof}
		Write $\Lambda_n=\Lambda(k_n;f_n)$ and $\Gamma_n=\Gamma(k_n;f_n)$. On a compact set $K\Subset\C^+$ containing $k_n$ and $k$ for all large $n$, the multiplier definitions and dominated convergence yield
		\begin{equation*}
			\begin{aligned}
				\|\Lambda_n-\Lambda(k;f)\|_{\mathfrak I_2}
				&\leq C_K\|f_n-f\|_{L^2}+\|\Lambda(k_n;f)-\Lambda(k;f)\|_{\mathfrak I_2}\to0,\\
				\|\Gamma_n-\Gamma(k;f)\|_{\mathfrak I_2}
				&\leq C_K\|f_n-f\|_{L^2}+\|\Gamma(k_n;f)-\Gamma(k;f)\|_{\mathfrak I_2}\to0,
			\end{aligned}
		\end{equation*}
		as $n\to\infty$. Here the last terms converge by dominated convergence, while the first terms use \eqref{eq:hilbert-schmidt-ray-bounds}. Hence, with $A_n=k_n\Lambda_n\Gamma_n$ and $A=k\Lambda(k;f)\Gamma(k;f)$,
		\begin{equation*}
			\begin{aligned}
				\|A_n-A\|_{\mathfrak I_1}
				&\leq |k_n-k|\|\Lambda_n\|_{\mathfrak I_2}\|\Gamma_n\|_{\mathfrak I_2}
				+|k|\|\Lambda_n-\Lambda(k;f)\|_{\mathfrak I_2}\|\Gamma_n\|_{\mathfrak I_2}\\
				&\quad+|k|\|\Lambda(k;f)\|_{\mathfrak I_2}\|\Gamma_n-\Gamma(k;f)\|_{\mathfrak I_2}
				\to0 \quad\text{as }n\to\infty.
			\end{aligned}
		\end{equation*}
		The continuity inequality for Fredholm determinants yields
		\begin{equation*}
			|a(k_n;f_n)-a(k;f)|
			\leq \|A_n-A\|_{\mathfrak I_1}
			\exp\bigl(1+\|A_n\|_{\mathfrak I_1}+\|A\|_{\mathfrak I_1}\bigr)\to0
		\end{equation*}
		as $n\to\infty$, which proves \eqref{eq:determinant-L2-continuity}.
	\end{proof}
	
	By Lemma~\ref{lem:perturbation-determinant}(iii), every zero $z\in\C^+$ of $a(\cdot;f)$ provides the eigenfunction required to define the following B\"acklund transform.
	
	\subsection{The B\"acklund transformation}
	
	\begin{defn}[B\"acklund transform]\label{defn:backlund-transform}
		Let $f\in\Sch(\R)$, and let $z\in\C^+$ be a zero of $a(\cdot;f)$. Choose $\mu=\sqrt z$ in the first quadrant and a nonzero $L^2$-eigenfunction $\psi=(\psi_1,\psi_2)^T$ of the spatial part of \eqref{eq:kaup-newell-lax-pair} at $\lambda=\mu$. Define
		\begin{equation}\label{eq:backlund-transform-data}
			\begin{aligned}
				d&:=\mu|\psi_1|^2+\bar\mu|\psi_2|^2,
				& G&:=\frac{\bar d}{d},\\
				S&:=\frac{4\im\  z\,\psi_1\bar\psi_2}{d},
				& \mathbf B_z(f)&:=G^2f+GS.
			\end{aligned}
		\end{equation}
		Since $\psi$ is nowhere vanishing,
		\begin{equation*}
			\re \  d=(\re \ \mu)(|\psi_1|^2+|\psi_2|^2)>0.
		\end{equation*}
		Hence $\mathbf B_z(f)$ is well-defined and is independent of the normalization of $\psi$.
	\end{defn}
	
	The B\"acklund identities below record the zero removal, mass shift, and Hamiltonian shift; see \cite{HGKV-2023-IMRN,KN-1978-JMP}. We use
	\begin{equation}\label{eq:second-hamiltonian}
		\mathcal H_2(f)
		:=\int_\R\left[
		|f_x|^2+\frac{3i}{4}|f|^2(f\bar f_x-\bar f f_x)
		+\frac12|f|^6
		\right]\,dx.
	\end{equation}
	
	The identities below, except for \eqref{eq:backlund-hamiltonian-shift}, follow from \cite{HGKV-2023-IMRN}. We derive \eqref{eq:backlund-hamiltonian-shift} below from the trace expansion in \cite{BP-2022-IM}.
	
	\begin{lem}[B\"acklund identities]\label{lem:backlund-identities}
		Let $f,z,G,S$, and $\mathbf B_z(f)$ be as in Definition~\ref{defn:backlund-transform}. Write $z=|z|e^{i\varphi}$, $0<\varphi<\pi$. Then $\mathbf B_z(f)\in\Sch(\R)$,
		\begin{equation*}
			|G|=1,\quad
			G(-\infty)=e^{-i\varphi},\quad
			G(+\infty)=e^{i\varphi},
		\end{equation*}
		and
		\begin{align}
			|z|(G+\bar G)-\frac14|S|^2&=2\re z,
			&\quad S'&=\frac{2|z|}{i}\bigl(\mathbf B_z(f)-f\bigr),
			\label{eq:backlund-algebraic-differential-identities}\\
			|f|^2-|\mathbf B_z(f)|^2&=-2i\bar GG',
			&\quad \|S\|_\infty&\leq4|\im\sqrt z|\leq4\sqrt{|z|}.
			\label{eq:backlund-density-bound-identities}
		\end{align}
		Furthermore,
		\begin{align}
			\label{eq:backlund-mass-shift}
			M(\mathbf B_z(f))&=M(f)-4\arg z,\\
			\label{eq:backlund-determinant-shift}
			a(k;\mathbf B_z(f))
			&=\frac{z}{\bar z}\frac{k-\bar z}{k-z}a(k;f),\\
			\label{eq:backlund-hamiltonian-shift}
			\mathcal H_2(\mathbf B_z(f))
			&=\mathcal H_2(f)-8\im(z^2).
		\end{align}
	\end{lem}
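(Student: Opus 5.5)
The paper attributes all identities except \eqref{eq:backlund-hamiltonian-shift} to \cite{HGKV-2023-IMRN}, so the real work is the Hamiltonian shift. I would derive it from the large-$k$ asymptotic expansion of $\log a(k;f)$ combined with the determinant shift \eqref{eq:backlund-determinant-shift}. The remaining statements are quoted from \cite{HGKV-2023-IMRN}; I would only sketch how they follow directly from Definition~\ref{defn:backlund-transform}.

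\emph{Algebraic and differential identities.} Write $|\psi_1|^2,|\psi_2|^2$ in terms of $d$. The ODE $\psi'=U(f,\mu)\psi$ gives
\begin{equation*}
d'=2i\,\mathrm{Im}(\mu^2)\cdot(\text{stuff})+\dots,
\end{equation*}
and the relations $|G|=1$, the first identity in \eqref{eq:backlund-algebraic-differential-identities}, the formula for $S'$, and $|f|^2-|\mathbf B_z(f)|^2=-2i\bar GG'$ come from differentiating $G$ and $S$. One then uses $|\psi_1\bar\psi_2|\le\frac12(|\psi_1|^2+|\psi_2|^2)$ and $|d|\ge \mathrm{Re}\,d$ to get $\|S\|_\infty\le 4\,\mathrm{Im}z/(2\,\mathrm{Re}\mu)=4\,\mathrm{Im}\mu$.

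\emph{Limits of $G$ at $\pm\infty$.} As $x\to-\infty$ the eigenfunction is dominated by $\psi_1\sim e^{-i\mu^2x}$, so $G\to\bar\mu/\mu=e^{-i\varphi}$. As $x\to+\infty$ it is dominated by $\psi_2$, so $G\to\mu/\bar\mu=e^{i\varphi}$.

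\emph{Mass shift.} Integrating $|f|^2-|\mathbf B_z(f)|^2=-2i\bar GG'$ gives $-2i$ times the increment of $\log G$. Since $\mathrm{Re}\,d>0$, the argument of $d$ stays in $(-\pi/2,\pi/2)$ and a continuous branch is available. The increment of $\log G$ is $2i\varphi$, so $M(f)-M(\mathbf B_z(f))=4\varphi=4\arg z$.

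\emph{Determinant shift.} This comes from the dressing formula: $\mathbf B_z(f)$ has Jost data obtained by multiplying by a rational matrix in $\lambda^2$ with a zero at $z$ and a pole at $\bar z$. The normalization at $k\to0$ gives the constant $z/\bar z$, consistent with $a(i\kappa;\cdot)\to1$ as $\kappa\downarrow0$.

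\emph{Hamiltonian shift.} From \cite{BP-2022-IM}, $\log a(k;f)$ has an asymptotic expansion as $|k|\to\infty$ in $\mathrm{Im}\,k>0$ (for Schwartz $f$) of the form
\begin{equation*}
\log a(k;f)=-\tfrac{i}{2}M(f)+\frac{c_1}{k}H_1(f)+\frac{c_2}{k^2}\mathcal H_2(f)+O(k^{-3}),
\end{equation*}
where $H_1$ is the momentum-type conserved quantity and the constants $c_1,c_2$ must be read off from that paper (I expect $c_2=\pm\tfrac{1}{4}$ or similar). I would then expand the logarithm of the factor in \eqref{eq:backlund-determinant-shift}:
\begin{equation*}
\log\frac{z}{\bar z}+\log\Bigl(1-\frac{\bar z}{k}\Bigr)-\log\Bigl(1-\frac{z}{k}\Bigr)=2i\arg z+\frac{z-\bar z}{k}+\frac{z^2-\bar z^2}{2k^2}+O(k^{-3}).
\end{equation*}
Matching the order-zero term recovers the mass shift, which checks the branch choice. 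Matching the $k^{-2}$ coefficient gives
\begin{equation*}
c_2\bigl(\mathcal H_2(\mathbf B_zf)-\mathcal H_2(f)\bigr)=\tfrac{1}{2}(z^2-\bar z^2)=i\,\mathrm{Im}(z^2).
\end{equation*}
Fixing the normalization so that this equals $-8\,\mathrm{Im}(z^2)$ requires $c_2=-i/8$.

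\textbf{Main obstacle.} The hardest step is pinning down this constant and confirming that the density in \cite{BP-2022-IM} coincides exactly with \eqref{eq:second-hamiltonian}, including the coefficient $3i/4$ and the gauge conventions. I would check it on the one-soliton, since $\mathbf B_z$ of the non-endpoint profile is $0$. Alternatively, I would verify the expansion directly by iterating the Riccati equation for $\log a$. To justify the expansion for $\mathbf B_z(f)$, I would use $\mathbf B_z(f)\in\mathcal S$, which follows from the exponential decay of $\psi$ and of its derivatives.
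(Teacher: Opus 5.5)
Your proposal follows the paper's proof. The paper likewise cites \cite{HGKV-2023-IMRN} for everything except \eqref{eq:backlund-hamiltonian-shift}. For that identity it quotes from \cite[Section~2.1]{BP-2022-IM} the expansion $\log a(k;f)=-\frac{i}{2}M(f)+\frac{i}{4k}\mathcal H_1(f)-\frac{i}{8k^2}\mathcal H_2(f)+O(\kappa^{-3})$ for $k=i\kappa$, $\kappa\to\infty$, then expands the logarithm of the factor in \eqref{eq:backlund-determinant-shift} (absorbing the $k$-independent branch constant) and matches the $k^{-2}$ coefficients.

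The constant $c_2=-i/8$ that you back-solved is exactly the one in that quoted expansion, so no separate calibration step is needed. It is also independently confirmed by the quadratic term $-\operatorname{tr}(k\Lambda\Gamma)$, which contributes $-\frac{i}{8k^2}\|f'\|_{L^2}^2$.
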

	
	\begin{proof}
		We prove only \eqref{eq:backlund-hamiltonian-shift}.
		The remaining assertions follow from
		\cite{HGKV-2023-IMRN}.
		
		Set
		\begin{equation*}
			\mathcal{H}_1(f):=\im\int_\R \bar f f_x\,dx
			+\frac12\int_\R |f|^4\,dx.
		\end{equation*}
		By \cite[Section~2.1]{BP-2022-IM}, the following expansion holds for $k=i\kappa$ as $\kappa\to\infty$:
		\begin{equation*}
			\log a(k;f)
			=-\frac{i}{2}M(f)
			+\frac{i}{4k}\mathcal{H}_1(f)
			-\frac{i}{8k^2}\mathcal H_2(f)
			+O(\kappa^{-3}),
		\end{equation*}
		where the branch is chosen so that
		$\log a(k;f)\to-iM(f)/2$.
		
		Let $g=\mathbf B_z(f)$. By
		\eqref{eq:backlund-determinant-shift},
		\begin{equation*}
			\begin{aligned}
				\log a(k;g)-\log a(k;f)
				&=C+\log\left(1-\frac{\bar z}{k}\right)
				-\log\left(1-\frac{z}{k}\right)\\
				&=C+\frac{z-\bar z}{k}
				+\frac{z^2-\bar z^2}{2k^2}
				+O(\kappa^{-3}),
			\end{aligned}
		\end{equation*}
		where $C$ is independent of $k$ and $e^C=z/\bar z$.
		Comparing the $k^{-2}$ coefficients yields
		\begin{equation*}
			-\frac{i}{8}
			\bigl[\mathcal H_2(g)-\mathcal H_2(f)\bigr]
			=\frac{z^2-\bar z^2}{2}
			=i\im(z^2),
		\end{equation*}
		proving \eqref{eq:backlund-hamiltonian-shift}.
	\end{proof}

	The Darboux covariance of the B\"acklund transform is standard; see \cite{XHW-2011-JPA,PSS-2017-DPDE,YFL-2025-MZ}.
	
	\begin{lem}[Darboux covariance]\label{lem:darboux}
		Let $0\in I\subset\R$ be an interval, and let $q\in C^1(I;\Sch(\R))$ be a solution of \eqref{eq:dnls-equation}. Fix $z\in\C^+$ such that $a(z;q(0))=a(z;q(t))=0$ for every $t\in I$. Then
		\begin{equation}\label{eq:backlund-removed-field-definition}
			r(t):=\mathbf B_z(q(t))
		\end{equation}
		is a classical solution of \eqref{eq:dnls-equation} on $I$.
	\end{lem}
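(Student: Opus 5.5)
We prove Lemma~\ref{lem:darboux} by the classical Darboux (dressing) argument for the Kaup--Newell Lax pair, carried out on the time-dependent eigenfunction. Write $\mu=\sqrt z$. Since $a(z;q(t))=0$ for all $t\in I$, Lemma~\ref{lem:perturbation-determinant}(iii) gives, for each $t$, a nonzero $H^1$ eigenfunction of the spatial problem at $\lambda=\mu$, unique up to a constant. The first step is to choose these eigenfunctions coherently in time so that a single $\psi(t,x)$ solves \emph{both} equations of \eqref{eq:kaup-newell-lax-pair} at $\lambda=\mu$.

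\emph{Step 1: a joint eigenfunction.} Fix $\psi(0,\cdot)$ and solve $\psi_t=V(q,\mu)\psi$ for each $x$. By the compatibility condition $U_t-V_x+[U,V]=0$, the quantity $\chi:=\psi_x-U\psi$ obeys $\chi_t=V\chi$ with $\chi(0)=0$, hence $\chi\equiv0$. Thus $\psi(t)$ solves the spatial problem at $\mu$ for every $t$. We also need $\psi(t)\in H^1$. From the $x$-equation and $q(t)\in\Sch$, the solution behaves like $c_\pm(t)e^{\mp i\mu^2x}$-type exponentials at $\pm\infty$, with exponents decaying or growing according to $\im\mu^2=\im z>0$. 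Evaluating the $t$-equation as $x\to\pm\infty$, where $V\to-2i\mu^4\sigma_3$ rapidly, shows that the coefficients of the growing modes satisfy linear ODEs and stay zero. Alternatively, by uniqueness in (iii), $\psi(t)=\beta(t)\psi^{L^2}(t)$ if a decaying solution exists. Since $a(z;q(t))=0$, it does, and one checks that the coefficients of the decaying mode cannot be reached from a growing one. Either route gives $\psi(t)\in H^1$, nowhere vanishing, and smooth in $(t,x)$ with Schwartz-type decay. The normalization-independence noted in Definition~\ref{defn:backlund-transform} means $r(t)=\mathbf B_z(q(t))$ is computed from this $\psi$.

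\emph{Step 2: the gauge (Darboux) matrix.} Define the Kaup--Newell Darboux matrix $T(\lambda)$ built from $\psi$ and from the symmetry partner $\hat\psi=(-\bar\psi_2,\bar\psi_1)^T$, which is the eigenfunction at $\lambda=\bar\mu$ (and $-\bar\mu$ by the $\lambda\mapsto-\lambda$ symmetry). Concretely, take $T(\lambda)=\mathrm{diag}(\ldots)\bigl(\lambda^2 I-\ldots\bigr)$, the standard one-fold Darboux matrix for Kaup--Newell as in \cite{XHW-2011-JPA}, vanishing on $\psi$ at $\lambda=\pm\mu$ and on $\hat\psi$ at $\lambda=\pm\bar\mu$. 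Set $\tilde\Psi=T\Psi$. The standard computation shows that
\[
T_x+TU(q)=U(r)T,\qquad T_t+TV(q)=V(r)T,
\]
hold with $r=G^2q+GS$ exactly as in \eqref{eq:backlund-transform-data}. The argument for each identity is the same. The left side minus the right side, multiplied by $T^{-1}$ on the right, is a polynomial in $\lambda$ whose residues at $\lambda^2=z,\bar z$ vanish because $\psi$ and $\hat\psi$ satisfy both Lax equations; this is exactly where Step~1 is used. Liouville-type matching of the leading coefficients then identifies the potential. The $x$-identity is in \cite{HGKV-2023-IMRN}, and it is what gives the formula for $r$. The $t$-identity is the new content, and the identification of the $\lambda^3,\lambda^2,\lambda$ coefficients of $V(r)$ uses the $x$-relations, for instance $S'=\frac{2|z|}{i}(r-q)$ from \eqref{eq:backlund-algebraic-differential-identities}.

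\emph{Step 3: conclusion.} Cross-differentiating the two gauge identities yields $U(r)_t-V(r)_x+[U(r),V(r)]=0$ on the range of $T$, hence identically. By the compatibility equivalence stated after \eqref{eq:kaup-newell-lax-pair}, $r$ solves \eqref{eq:dnls-equation}. Regularity follows from Lemma~\ref{lem:backlund-identities}, which gives $r(t)\in\Sch$, together with the smooth dependence of $\psi$ on $t$. Here $d$ is nonvanishing since $\re d>0$, so $r\in C^1(I;\Sch)$ and is classical.

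The main obstacle is Step~1, specifically verifying that the time-evolved eigenfunction remains in $L^2$ uniformly and smoothly in $t$. The algebra in Step~2 is lengthy but routine and can be cited from \cite{PSS-2017-DPDE,YFL-2025-MZ}.
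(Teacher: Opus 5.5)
Your proposal is correct and is essentially the standard Darboux-dressing argument that the paper does not reprove but cites from \cite{XHW-2011-JPA,PSS-2017-DPDE,YFL-2025-MZ}: evolve the eigenfunction in time, keep it in $H^1$ by showing the growing-mode coefficients stay zero, gauge both Lax operators by $T$, and remove the poles of $(T_x+TU)T^{-1}$ and $(T_t+TV)T^{-1}$. One small correction, which also makes Step~2 concrete: since $T(-\lambda)=\sigma_3T(\lambda)\sigma_3$, the kernel of $T(-\mu)$ is spanned by $\sigma_3\psi$, not $\psi$, and one can take $T(\lambda)=\begin{pmatrix}G\lambda^2-|z| & -\frac{i}{2}\lambda S\\ -\frac{i}{2}\lambda\bar S & \bar G\lambda^2-|z|\end{pmatrix}$, for which $T(\mu)\psi=0$ and $\det T=(\lambda^2-z)(\lambda^2-\bar z)$, and the identity $T_x+TU(q)=U(r)T$ with $r=G^2q+GS$ reduces coefficient by coefficient to the identities of Lemma~\ref{lem:backlund-identities}.
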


	\subsection{The mass--zeros--winding formula}
	
	The preceding transformation removes a prescribed zero. The number of zeros in a sector is determined by the following argument-principle identity; see \cite{BP-2022-IM,HGKV-2023-IMRN}.
	
	\begin{lem}[Mass--zeros--winding]\label{lem:mass-zeros-winding}
		Let $f\in\Sch(\R)$, $0<\theta\leq\pi$, and suppose that $a(\cdot;f)$ has no zero on $e^{i\theta}\R^+$. Then
		\begin{equation}\label{eq:mass-zeros-winding-formula}
			M(f)=4\pi N_\theta(f)
			-\frac2i\int_0^{\infty e^{i\theta}}
			\frac{\partial_ka(k;f)}{a(k;f)}\,dk,
		\end{equation}
		where $N_\theta(f)$ is the number of zeros of $a(\cdot;f)$, counted with algebraic multiplicity, in
		\begin{equation*}
			\{k\in\C^+:\theta<\arg k<\pi\}.
		\end{equation*}
	\end{lem}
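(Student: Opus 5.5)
The plan is to prove \eqref{eq:mass-zeros-winding-formula} by the argument principle, applied to $a(\cdot;f)$ on the boundary of the truncated sector
\begin{equation*}
	D_{\epsilon,R}:=\{k:\ \epsilon<|k|<R,\ \theta<\arg k<\pi\},
\end{equation*}
and then letting $\epsilon\downarrow0$ and $R\to\infty$.

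\textbf{Analytic input.} For $f\in\Sch(\R)$, $a(\cdot;f)$ is holomorphic in $\C^+$ and extends continuously to $\overline{\C^+}\setminus\{0\}$. This follows from the Jost--Pais identity $a(k;f)=a_0(\sqrt k;f)$ and the standard Jost-function analysis for Schwartz potentials. On the negative real axis $k<0$, we have $\sqrt k\in i\R^+$, and the unitarity relation gives $|a_0|^2+|b_0|^2=1$ for $\lambda^2$ real. The analogous relation for the Kaup--Newell problem is $|a_0|^2+|b_0|^2=1$ (or with a sign, depending on whether $\lambda$ is real or imaginary). Either way, $a$ does not vanish on $(-\infty,0)$. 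I will check this: for $\lambda\in i\R$ the matrix $U$ is again skew-Hermitian up to sign conventions, so $|a|\ge1$ or $|a|\le1$ with nonvanishing. Zeros of $a$ in $\C^+$ are isolated and finite in number. Finiteness uses the zero-free region Lemma~\ref{lem:perturbation-determinant}(iv) near infinity, and near $0$ it uses $a\to1$ as $k\to0$.

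\textbf{Contour.} The boundary $\partial D_{\epsilon,R}$ consists of four pieces:
\begin{itemize}
	\item the ray segment $e^{i\theta}[\epsilon,R]$ (traversed inward in the positive orientation);
	\item the arc $|k|=R$ from $\arg k=\theta$ to $\arg k=\pi$;
	\item the segment $[-R,-\epsilon]$;
	\item the small arc $|k|=\epsilon$.
\end{itemize}
By the argument principle,
\begin{equation*}
	2\pi i\,N_\theta(f)=\oint_{\partial D_{\epsilon,R}}\frac{\partial_k a}{a}\,dk
\end{equation*}
for $\epsilon$ small and $R$ large. A zero lying on the negative axis itself is excluded by the previous step, or alternatively by working on $\pi-\delta$ and letting $\delta\to0$.

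\textbf{Limits of the pieces.}
\begin{itemize}
	\item \emph{Small arc.} Uniformly in the closed upper half plane, $a(k;f)\to1$ as $k\to0$ (cf.\ \eqref{eq:determinant-ray-endpoint-limits}). Hence the change of $\log a$ along the small arc tends to $0$.
	\item \emph{Large arc.} Uniformly in the sector, $a(k;f)\to e^{-iM(f)/2}$ as $|k|\to\infty$, by the high-frequency expansion of $\log a$ quoted from \cite{BP-2022-IM}, valid on $\overline{\C^+}$ for Schwartz $f$. Thus the large arc contributes $o(1)$.
	\item \emph{Negative real axis.} This is the crucial contribution. Along $(-\infty,0)$, the total change of $\log a$ has to be evaluated. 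For Schwartz $f$ the total variation of $\arg a$ from $-\infty$ to $0$ along the real axis equals the difference of the endpoint values of $\log a$, namely $0-(-iM/2)$, plus $2\pi i\cdot(\text{winding})$.
\end{itemize}

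The approach I expect to be cleaner is to first prove the formula for $\theta=\pi/2$ directly, using the full upper half plane. On the real line one writes $\log a$ via the trace formula $\log a(k)=\frac{1}{2\pi i}\int\frac{\log(1\pm|b/a|^2)}{\xi-k}d\xi+\dots$. Equivalently, I take as known the $\theta=\pi$ version
\begin{equation*}
	M(f)=4\pi N_\pi-\frac2i\int_0^{-\infty}\frac{a'}{a}\,dk,
\end{equation*}
noting that $N_\pi=0$ by definition, so this reads $M=-\frac{2}{i}[\log a]_0^{-\infty}$. This is simply the statement that $a$ has the limit $e^{-iM/2}$ at $-\infty$, with continuous logarithm from $1$ at $0$, and with no added winding along the negative axis. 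I expect this last point — the absence of extra winding along $(-\infty,0)$ — to be the main obstacle. I would handle it via the real-line representation $\log|a|$ together with the Kaup--Newell trace formula from \cite{HGKV-2023-IMRN}, which says $\log a$ extends continuously to the negative axis as the limit of the branch defined through $\log\det$ expansions.

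\textbf{Conclusion.} Given the $\theta=\pi$ case, the general case follows by the argument principle on $D_{\epsilon,R}$ as above. Subtracting the two ray integrals, the difference between the integral along $e^{i\theta}\R^+$ and the integral along $(-\infty,0]$ equals $2\pi i N_\theta$ plus arc contributions, which vanish in the limit. Multiplying by $-2/i$ gives the term $4\pi N_\theta$, which yields \eqref{eq:mass-zeros-winding-formula}. Multiplicities are handled automatically by the argument principle.
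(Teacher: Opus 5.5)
The paper gives no proof of this lemma; it quotes it from \cite{BP-2022-IM,HGKV-2023-IMRN}. Your argument-principle step is sound. On $\{\epsilon<|k|<R,\ \theta<\arg k<\pi\}$ it gives
\[
\int_0^{\infty e^{i\theta}}\frac{\partial_ka}{a}\,dk-\int_0^{-\infty}\frac{\partial_ka}{a}\,dk=2\pi i\,N_\theta(f)
\]
once the arcs are controlled. A few small corrections:
\begin{itemize}
\item With positive orientation the ray is traversed outward and the negative axis inward, the reverse of what you wrote, though your final sign is right.
\item The arcs need $a\to1$ and $a\to e^{-iM(f)/2}$ uniformly on the closed sector. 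Lemma~\ref{lem:perturbation-determinant}(v) gives these limits only on $i\R^+$, but Jost-function estimates for Schwartz $f$ give them on the sector.
\item Finiteness of zeros in the sector comes from those limits together with nonvanishing on the negative axis, not from Lemma~\ref{lem:perturbation-determinant}(iv).
\item The nonvanishing needs no hedging: for $\lambda=is$, $U=is^2\sigma_3+is\mathcal P(q)$ lies in $\mathfrak{su}(1,1)$, so $|a|^2=1+|b|^2\geq1$ on $(-\infty,0)$.
\end{itemize}

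The genuine gap is the case $\theta=\pi$, which you take as known. By your own reduction every value of $\theta$ is equivalent to every other, so this case is the whole lemma, not an input to it. It says that the continuous logarithm of $a(\cdot;f)$ along $(-\infty,0)$, starting from $0$ at $k=0^-$, ends at exactly $-iM(f)/2$. The limit $a(-\infty)=e^{-iM(f)/2}$ only gives $-iM(f)/2+2\pi i m$ for some unknown integer $m$, and nothing you invoke determines $m$.

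Consider the trace formula. Assuming $a$ has no zeros on $(0,\infty)$, the Schwarz representation of $\log\bigl(a\prod_j\frac{k-\bar z_j}{k-z_j}\bigr)$ in terms of $\log|a|$ on $\R$ yields exactly
\[
\int_0^{-\infty}\frac{\partial_ka}{a}\,dk=\frac{i}{\pi}\int_\R\frac{\log|a(\xi;f)|}{\xi}\,d\xi-2i\sum_j\arg z_j .
\]
So the $\theta=\pi$ case is equivalent to the exact mass formula $M(f)=4\sum_j\arg z_j-\frac2\pi\int_\R\frac{\log|a(\xi;f)|}{\xi}\,d\xi$. The endpoint values fix that formula only modulo $4\pi$. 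The series $-\sum_n\frac1n\operatorname{tr}(k\Lambda\Gamma)^n$ converges only where $\|k\Lambda\Gamma\|<1$, which for large mass need not hold along the whole ray, so it cannot select the branch either.

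You can close the gap within your framework by a homotopy in the amplitude. Set $f_s:=sf$ for $s\in[0,1]$. Since $|a(k;f_s)|\geq1$ for $k<0$, the negative axis stays zero-free for every $s$. This is not true of $i\R^+$, which zeros may cross during the deformation, and it is exactly why reducing to $\theta=\pi$ is the right move.

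You then need two facts:
\begin{itemize}
\item $(s,k)\mapsto a(k;f_s)$ is continuous on $[0,1]\times(-\infty,0)$.
\item $a(k;f_s)\to1$ as $k\uparrow0$ and $a(k;f_s)\to e^{-is^2M(f)/2}$ as $k\to-\infty$, both uniformly in $s$.
\end{itemize}
Near $0$ these follow from Volterra bounds with $\|f_s\|_{L^1}\leq\|f\|_{L^1}$; near $-\infty$ they follow from the standard gauge transformation.

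With these in hand, $L(s):=\int_0^{-\infty}\partial_ka(k;f_s)/a(k;f_s)\,dk$ is continuous in $s$, takes values in $-is^2M(f)/2+2\pi i\mathbb Z$, and satisfies $L(0)=0$. Hence $L(1)=-iM(f)/2$, which is the missing case.
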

	
	\subsection{Well-posedness}
	
	We will use the following global $H^1(\R)$ well-posedness theory; see \cite{BP-2022-IM,HGKNV-2026-JEMS}.
	
	\begin{prop}[Global well-posedness]\label{prop:h1-well-posedness}
		For every $q_0\in H^1(\R)$, the equation \eqref{eq:dnls-equation} admits a unique global solution
		\begin{equation*}
			q\in C(\R;H^1(\R)).
		\end{equation*}
		For every $T>0$, the solution map is continuous from $H^1(\R)$ to $C([-T,T];H^1(\R))$. Moreover, for $q_0\in\Sch(\R)$, $q\in C^1(\R;\Sch(\R))$.
	\end{prop}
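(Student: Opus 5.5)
The proposition collects known results, so the plan is to assemble it from \cite{BP-2022-IM,HGKNV-2026-JEMS} together with classical local theory, rather than to build a new argument. \textbf{Step 1 (local theory).} Apply the gauge transformation of Hayashi--Ozawa \cite{HO-1992-PD} to turn \eqref{eq:dnls-equation} into a system without the derivative loss. A contraction argument in $C([-T,T];H^1)$ then gives local existence and uniqueness. The existence time depends only on $\|q_0\|_{H^1}$, and the data-to-solution map is continuous from $H^1$ to $C([-T,T];H^1)$. (Alternatively, take the $H^{1/2}$ theory of Takaoka \cite{Takaoka-1999-ADE} and upgrade it by persistence of regularity.) Consequently, global existence and continuity on every $[-T,T]$ follow once $\|q(t)\|_{H^1}$ is shown to stay bounded on bounded time intervals.

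\textbf{Step 2 (a priori $H^1$ bound).} This is the step I expect to be the main obstacle, because the data have large mass and the classical variational bounds need $M<4\pi$. Bahouri--Perelman \cite{BP-2022-IM} prove global well-posedness in $H^{1/2}$ for arbitrary data, with $\sup_{t\in\R}\|q(t)\|_{H^{1/2}}\le C(\|q_0\|_{H^{1/2}})$; I would take this bound as the input. For $H^1$ I would use conservation of the second Hamiltonian $\mathcal H_2$ from \eqref{eq:second-hamiltonian}. By Sobolev embedding $H^{1/2}\hookrightarrow L^6$, the cubic term satisfies
\[
\Bigl|\int_\R|f|^2 f\bar f_x\,dx\Bigr|\le \|f\|_{L^6}^3\|f_x\|_{L^2}\le C\|f\|_{H^{1/2}}^3\|f_x\|_{L^2},
\]
and the sextic term is nonnegative. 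Hence
\[
\|q_x(t)\|_{L^2}^2\le \mathcal H_2(q_0)+C\|q(t)\|_{H^{1/2}}^3\|q_x(t)\|_{L^2}.
\]
This gives $\sup_t\|q_x(t)\|_{L^2}\lesssim \mathcal H_2(q_0)^{1/2}+C(\|q_0\|_{H^{1/2}})^3$. I would prove this first for Schwartz data, where conservation of $\mathcal H_2$ is justified via Lemma~\ref{lem:perturbation-determinant}(i) and the expansion used in Lemma~\ref{lem:backlund-identities}. It then extends to $H^1$ data by density and the local continuity of Step 1. Combining the bound with Step 1 gives global existence, uniqueness, and continuity of the solution map into $C([-T,T];H^1)$.

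\textbf{Step 3 (Schwartz data).} For $q_0\in\Sch(\R)$, standard persistence of regularity gives $q\in C(\R;H^s)$ for every $s$. The Step 1 estimates propagate at higher Sobolev levels with the $H^1$ bound as the controlling quantity, since the equation is semilinear after gauging. Spatial decay, i.e.\ $x^mq\in C(\R;H^s)$, follows from weighted energy estimates obtained by commuting $x$ with the linear flow through the vector field $x+2it\partial_x$. These estimates grow at most polynomially in $t$ and therefore remain finite on compact intervals. Finally, $q\in C^1(\R;\Sch)$ follows from the equation, since $q_t=iq_{xx}-\partial_x(|q|^2q)$ is continuous in time with values in $\Sch(\R)$.
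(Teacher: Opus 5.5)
Your proposal is correct and follows the same route as the paper, which gives no argument of its own and simply invokes the global theory of Bahouri--Perelman and Harrop-Griffiths--Killip--Ntekoume--Vi\c{s}an; your upgrade of the time-uniform $H^{1/2}$ bound to an $H^1(\R)$ bound through conservation of $\mathcal H_2$, the estimate $H^{1/2}\hookrightarrow L^6$, and the sign of the sextic term is the standard way to obtain the stated $H^1$ result from those works. One correction: the Bahouri--Perelman bound is proved for each datum (and uniformly on compact families), not as a function of $\|q_0\|_{H^{1/2}}$ alone, but this does not affect your argument, because you only need the bound along each fixed solution, and continuity of the flow on $[-T,T]$ then follows by iterating the local theory around the limit solution.
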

	
	\vspace{1cm}
	\section{Spectral analysis}\label{sec:spectral-analysis}
	\vspace{0.5cm}
	
	For convenience, we set $\rho:=c/4>0$ and define
	\begin{equation*}
		Q_\rho(x):=4\sqrt\rho\,e^{2i\rho x}\frac{1-4i\rho x}{(1+4i\rho x)^2}.
	\end{equation*}
	By \eqref{eq:endpoint-complex-profile},
	\begin{equation}\label{eq:endpoint-profile-Q-correspondence}
		\phi_{c^2/4,c}(x)=iQ_{\rho}(x).
	\end{equation}
	Consequently,
	\begin{equation*}
		q_\rho(t,x):=e^{4i\rho^2t}Q_\rho(x-4\rho t)
	\end{equation*}
	is a right-traveling endpoint solitary wave of \eqref{eq:dnls-equation}. Define its orbit by
	\begin{equation*}
		\mathcal O_\rho:=\bigl\{e^{i\gamma}Q_\rho(\cdot-y):(\gamma,y)\in\R^2\bigr\}.
	\end{equation*}
	Hence $\mathcal O_{\phi_{c^2/4,c}}=\mathcal O_{\rho}$, where
	\begin{equation*}
		\mathcal O_{\phi_{c^2/4,c}} =\bigl\{e^{i\gamma}\phi_{c^2/4,c}(\cdot-y):(\gamma,y)\in\R^2\bigr\}.
	\end{equation*}
	Therefore, Theorem~\ref{thm:main} reduces to proving the orbital stability of $\mathcal O_\rho$.
	
	\subsection{Uniform convergence on the positive imaginary ray}

	\begin{lem}\label{lem:uniform-det}
		Suppose that $f,g\in L^2(\R)$ and $\|f\|_{L^2}+\|g\|_{L^2}\leq M_0$ for some $M_0>0$. Then there exists a constant $C_{M_0}>0$ such that
		\begin{equation}\label{eq:uniform-determinant-lipschitz-bound}
			\sup_{\kappa>0}|a(i\kappa;f)-a(i\kappa;g)|\leq C_{M_0}\|f-g\|_{L^2}.
		\end{equation}
		Furthermore,
		\begin{equation}\label{eq:algebraic-profile-trivial-determinant}
			a(k;Q_\rho)\equiv1\quad \text{for }k\in\C^+.
		\end{equation}
		Consequently, if $f_n\to Q_\rho\text{ in }L^2(\R)$ as $n\to\infty$, then we have
		\begin{equation*}
			\sup_{\kappa>0}|a(i\kappa;f_n)-1|\to 0\quad \text{as}\quad n\to \infty.
		\end{equation*}
	\end{lem}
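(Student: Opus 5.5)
The plan has three parts: a uniform Lipschitz bound for the Fredholm determinant on the ray, a symmetry reduction showing $a(\cdot;Q_\rho)\equiv1$, and the immediate combination of the two.

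\emph{Step 1: the Lipschitz bound \eqref{eq:uniform-determinant-lipschitz-bound}.} Write $A(f)=i\kappa\Lambda(i\kappa;f)\Gamma(i\kappa;f)$. By \eqref{eq:hilbert-schmidt-ray-bounds} with $\im k=\kappa$,
\begin{equation*}
\|\Lambda(i\kappa;f)\|_{\mathfrak I_2}+\|\Gamma(i\kappa;f)\|_{\mathfrak I_2}\lesssim\kappa^{-1/2}\|f\|_{L^2}.
\end{equation*}
The operators $\Lambda$ and $\Gamma$ are linear (respectively conjugate-linear) in $f$, so the same bound applies to $f-g$. Splitting
\begin{equation*}
A(f)-A(g)=i\kappa\bigl[\Lambda(f-g)\Gamma(f)+\Lambda(g)\Gamma(f-g)\bigr]
\end{equation*}
and applying \eqref{inequality-schatten} with $p_1=p_2=2$ gives
\begin{equation*}
\|A(f)-A(g)\|_{\mathfrak I_1}\lesssim\kappa\cdot\kappa^{-1}\|f-g\|_{L^2}(\|f\|_{L^2}+\|g\|_{L^2})\lesssim M_0\|f-g\|_{L^2}
\end{equation*}
and $\|A(f)\|_{\mathfrak I_1}\lesssim M_0^2$, uniformly in $\kappa>0$. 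The Fredholm continuity inequality $|\det(1-A)-\det(1-B)|\le\|A-B\|_{\mathfrak I_1}\exp(1+\|A\|_{\mathfrak I_1}+\|B\|_{\mathfrak I_1})$, already used in the proof of Lemma~\ref{lem:determinant-continuity}, then yields \eqref{eq:uniform-determinant-lipschitz-bound} with $C_{M_0}\sim M_0e^{1+CM_0^2}$. The key point is that the factor $\kappa$ is exactly cancelled by the two Hilbert--Schmidt factors, each of order $\kappa^{-1/2}$.

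\emph{Step 2: the identity \eqref{eq:algebraic-profile-trivial-determinant}.} Let $\lambda=4\rho$. Then
\begin{equation*}
Q_\rho(x)=\lambda^{1/2}\cdot\tfrac{1}{\sqrt{\cdot}}\text{-free rescaling of }q_a,\qquad\text{namely}\qquad Q_\rho(x)=\lambda^{1/2}q_a(\lambda x).
\end{equation*}
Indeed, $q_a(\lambda x)=2e^{i\lambda x/2}(1-i\lambda x)/(1+i\lambda x)^2$, and $\lambda/2=2\rho$, $2\sqrt\lambda=4\sqrt\rho$. Next, check the scaling covariance of $a$. With $f_\lambda(x)=\lambda^{1/2}f(\lambda x)$ and the unitary dilation $U_\lambda$, one has
\begin{equation*}
(-ik\mp\partial_x)^{-1/2}=\lambda^{-1/2}U_\lambda^{-1}(-ik/\lambda\mp\partial_x)^{-1/2}U_\lambda
\end{equation*}
and $M_{f_\lambda}=\lambda^{1/2}U_\lambda^{-1}M_fU_\lambda$. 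Hence $k\Lambda\Gamma$ at $(k,f_\lambda)$ is unitarily equivalent to $(k/\lambda)\Lambda\Gamma$ at $(k/\lambda,f)$, so $a(k;f_\lambda)=a(k/\lambda;f)$. Combined with \eqref{a=1-qa}, this gives $a(k;Q_\rho)=a(k/\lambda;q_a)=1$.

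\emph{Step 3: the consequence.} If $f_n\to Q_\rho$ in $L^2(\R)$, then $\|f_n\|_{L^2}+\|Q_\rho\|_{L^2}\le M_0$ for all large $n$. Applying Step 1 with $g=Q_\rho$ and using Step 2 gives
\begin{equation*}
\sup_{\kappa>0}|a(i\kappa;f_n)-1|\le C_{M_0}\|f_n-Q_\rho\|_{L^2}\to0.
\end{equation*}

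The only delicate point is the bookkeeping in Step 2: tracking the powers of $\lambda$ through the square-root multipliers so that the product $k\Lambda\Gamma$ picks up exactly the factor $\lambda^{-1}$. Step 1 is routine once one notices the exact cancellation of $\kappa$, which is what makes the bound uniform on the whole ray.
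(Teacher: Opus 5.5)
Your proposal is correct and follows essentially the same route as the paper. The factor $\kappa$ is cancelled by the two Hilbert--Schmidt bounds from \eqref{eq:hilbert-schmidt-ray-bounds}, and the Fredholm determinant Lipschitz inequality then gives \eqref{eq:uniform-determinant-lipschitz-bound}; the scaling identity $a(k;\sqrt{h}\,q(h\,\cdot))=a(k/h;q)$ applied to $Q_\rho=\sqrt{4\rho}\,q_a(4\rho\,\cdot)$ gives \eqref{eq:algebraic-profile-trivial-determinant}, and your unitary-dilation computation just spells out the paper's ``changing variables'' (the garbled phrase before ``namely'' in Step 2 should simply be deleted).
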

	
	\begin{proof}
		Set
		\begin{equation*}
			A_f(\kappa):=i\kappa\Lambda(i\kappa;f)\Gamma(i\kappa;f).
		\end{equation*}
		Then
		\begin{equation*}
			a(i\kappa;f)=\det(1-A_f(\kappa)).
		\end{equation*}
		Since $\Lambda$ is linear in $f$ and $\Gamma$ is linear in $\bar f$, by \eqref{inequality-schatten} and \eqref{eq:hilbert-schmidt-ray-bounds}, we have
		\begin{equation}\label{eq:trace-class-difference-bound}
			\begin{aligned}
				\|A_f(\kappa)-A_g(\kappa)\|_{\mathfrak I_1}
				&\leq\kappa\|\Lambda(i\kappa;f)-\Lambda(i\kappa;g)\|_{\mathfrak I_2}\|\Gamma(i\kappa;f)\|_{\mathfrak I_2} \\
				&\qquad+\kappa\|\Lambda(i\kappa;g)\|_{\mathfrak I_2}\|\Gamma(i\kappa;f)-\Gamma(i\kappa;g)\|_{\mathfrak I_2}\\
				&\lesssim(\|f\|_{L^2}+\|g\|_{L^2})\|f-g\|_{L^2}.
			\end{aligned}
		\end{equation}
		Therefore, \eqref{eq:uniform-determinant-lipschitz-bound} follows from the following Fredholm determinant inequality
		\begin{equation*}
			|\det(1-A)-\det(1-B)|\leq\|A-B\|_{\mathfrak I_1}\exp\bigl(1+\|A\|_{\mathfrak I_1}+\|B\|_{\mathfrak I_1}\bigr).
		\end{equation*}
		
		Next, we verify \eqref{eq:algebraic-profile-trivial-determinant}. Note that
		\begin{equation*}
			Q_\rho(x)=\sqrt{4\rho}\,q_a(4\rho x),
		\end{equation*}
		where $q_a$ is defined in \eqref{a=1-qa}. Changing variables in \eqref{eq:resolvent-factor-operators}, we have
		\begin{equation*}
			a(k;\sqrt{h}\,q(h\,\cdot))=a(k/h;q).
		\end{equation*}
		Thus, we obtain \eqref{eq:algebraic-profile-trivial-determinant}.
	\end{proof}
	
	\subsection{Winding and the unique second-quadrant zero}
	
	\begin{lem}\label{lem:unique-zero}
		Let $f_n\in\Sch(\R)$ and suppose that $f_n\to Q_\rho$ in $H^1(\R)$. For all sufficiently large $n$, $a(\cdot;f_n)$ has exactly one zero, counted with algebraic multiplicity, in
		\begin{equation*}
			\Sigma_{\rm II}:=\{k\in\C^+:\pi/2<\arg k<\pi\}.
		\end{equation*}
	\end{lem}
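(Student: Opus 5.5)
We apply Lemma~\ref{lem:mass-zeros-winding} with $\theta=\pi/2$, so that $N_{\pi/2}(f_n)$ is exactly the number of zeros of $a(\cdot;f_n)$ in $\Sigma_{\rm II}$. The formula then reads
\begin{equation*}
4\pi N_{\pi/2}(f_n)=M(f_n)+\frac2i\int_0^{i\infty}\frac{\partial_ka(k;f_n)}{a(k;f_n)}\,dk .
\end{equation*}
The plan is to show that the right-hand side converges to $4\pi$. Since $N_{\pi/2}(f_n)$ is an integer, it must then equal $1$ for all large $n$.

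First, $M(f_n)\to M(Q_\rho)=M(W_c)=4\pi$, since $f_n\to Q_\rho$ in $L^2(\R)$ and $|Q_\rho|=W_c$. Second, Lemma~\ref{lem:uniform-det} gives
\begin{equation*}
\sup_{\kappa>0}|a(i\kappa;f_n)-1|\to0 .
\end{equation*}
In particular, for large $n$ we have $|a(i\kappa;f_n)-1|<1/2$ for every $\kappa>0$. Hence $a(\cdot;f_n)$ has no zero on the ray $i\R^+$, and the hypothesis of Lemma~\ref{lem:mass-zeros-winding} is satisfied.

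Third, we identify the integral as a logarithm. On the ray, $a(i\kappa;f_n)$ stays in the disc $\{|w-1|<1/2\}$, where the principal $\operatorname{Log}$ is holomorphic. Therefore $\operatorname{Log}a(i\kappa;f_n)$ is a continuous primitive along the ray, and
\begin{equation*}
\int_0^{i\infty}\frac{\partial_ka}{a}\,dk=\lim_{\kappa\to\infty}\operatorname{Log}a(i\kappa;f_n)-\lim_{\kappa\downarrow0}\operatorname{Log}a(i\kappa;f_n).
\end{equation*}
By Lemma~\ref{lem:perturbation-determinant}(v), the lower limit is $\operatorname{Log}1=0$. The upper limit is $\operatorname{Log}e^{-iM(f_n)/2}$. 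Because $|e^{-iM(f_n)/2}-1|\le1/2$, this equals $-iM(f_n)/2+2\pi i m_n$ for some integer $m_n$. Both quantities are small, so $m_n$ must be the integer closest to $M(f_n)/(4\pi)$, which is $1$ for large $n$.

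Hence the integral equals $-iM(f_n)/2+2\pi i$. Substituting, the right-hand side of the winding formula becomes
\begin{equation*}
M(f_n)+\frac2i\left(-\frac{i}{2}M(f_n)+2\pi i\right)=4\pi,
\end{equation*}
so $N_{\pi/2}(f_n)=1$.

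The main obstacle is this bookkeeping of branches at the two endpoints of the ray. One needs the improper integral to converge, and one needs the primitive to have genuinely small total variation. Both follow from the \emph{uniform} smallness of $a(i\kappa;f_n)-1$ over all $\kappa\in(0,\infty)$, including the tails $\kappa\downarrow0$ and $\kappa\to\infty$. This uniformity is exactly what the $\kappa$-independent Lipschitz bound \eqref{eq:uniform-determinant-lipschitz-bound} combined with \eqref{eq:algebraic-profile-trivial-determinant} provides, and it forces the winding to match $-M/2$ modulo $2\pi$ with no extra turns.

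The count is with algebraic multiplicity, as in Lemma~\ref{lem:mass-zeros-winding}. Only $L^2$ convergence of $f_n$ is used; the $H^1$ hypothesis is not needed for this step.
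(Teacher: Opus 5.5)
Your proposal is correct and follows the paper's proof essentially step for step. Both apply the mass--zeros--winding identity at $\theta=\pi/2$, use the uniform bound $a(i\kappa;f_n)\in B(1,\tfrac12)$ from Lemma~\ref{lem:uniform-det}, take the principal logarithm as a primitive along the ray, and identify the branch $\operatorname{Log}e^{-iM(f_n)/2}=i(2\pi-M(f_n)/2)$ for large $n$ to conclude $N_n=1$; your remark that only $L^2$ convergence is needed here is also accurate.
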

	
	\begin{proof}
		By Lemma~\ref{lem:uniform-det}, for sufficiently large $n$, we have $ a(i\kappa;f_n)\in B(1,\frac{1}{2})$ for all $\kappa>0$. By Lemma \ref{lem:mass-zeros-winding}, 		
		\begin{equation*}
			M(f_n)=4\pi N_n
			-\frac2i\int_0^{i\infty}
			\frac{\partial_ka(k;f_n)}{a(k;f_n)}\d k,
		\end{equation*}
		where $N_n$ is the number of zeros of $a(\cdot; f_n)$ in $\Sigma_{\rm II}$. For $0<\varepsilon<R$, the fundamental theorem of calculus shows that
		\begin{equation*}
			\int_{i\varepsilon}^{iR}\frac{\partial_ka(k;f_n)}{a(k;f_n)}\,dk=\operatorname{Log}a(iR;f_n)-\operatorname{Log}a(i\varepsilon;f_n).
		\end{equation*}
		For sufficiently large $n$,
		\begin{equation*}
			\operatorname{Log}\left( e^{-i\frac{M(f_n)}{2}} \right)=i\left(2\pi-\frac{M(f_n)}{2}\right).
		\end{equation*}
		Letting $\varepsilon\to 0$ and $R\to\infty$ and using
		\eqref{eq:determinant-ray-endpoint-limits}, we obtain
		\begin{equation*}
			\int_0^{i\infty}\frac{\partial_ka(k;f_n)}{a(k;f_n)}\,dk=i\left(2\pi-\frac{M(f_n)}{2}\right).
		\end{equation*}
		Thus,
		\begin{equation*}
			M(f_n)=4\pi N_n-\frac2i\,i\left(2\pi-\frac{M(f_n)}{2}\right).
		\end{equation*}
		Therefore, we obtain $N_n=1$.
	\end{proof}
	
	\subsection{Spectral-zero selection}
	Next, we identify the limiting spectral zero as $-\rho$ and show that the corresponding B\"acklund transform tends to zero in $H^1(\R)$. More precisely,
	\begin{prop}\label{prop:scale}
		Let $f_n\in\Sch(\R)$ satisfy $f_n\to Q_\rho$ in $H^1(\R)$ as $n\to\infty$, let $z_n$ be the unique zero supplied by Lemma~\ref{lem:unique-zero}, and denote
		\begin{equation*}
			r_n:=\mathbf B_{z_n}(f_n).
		\end{equation*}
		Then
		\begin{equation}\label{eq:endpoint-scale-selection}
			z_n\to-\rho,\quad  r_n\to0\quad\text{in }H^1(\R) \text{ as } n\to\infty.
		\end{equation}
	\end{prop}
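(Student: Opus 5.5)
The plan is to get the smallness of $r_n$ from the mass and Hamiltonian shift identities. These only require $\arg z_n\to\pi$ and $|z_n|$ bounded. The exact value $-\rho$ of the limit will then be identified by a reconstruction argument.

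\emph{Step 1: boundedness and $\arg z_n\to\pi$.} Since $f_n\to Q_\rho$ in $L^2(\R)$, the family $\mathcal F=\{f_n\}$ is bounded in $L^2(\R)$ and $L^2$-equicontinuous under translations. Fix a small $\varepsilon$ with $C_{M(\mathcal F)}\varepsilon<\pi/2$ and apply Lemma~\ref{lem:perturbation-determinant}(iv). Because $\arg z_n>\pi/2$, the alternative $\arg z_n\le C_{M(\mathcal F)}\varepsilon$ is excluded, so $|z_n|\le N/\varepsilon^2$ uniformly. Next, by \eqref{eq:backlund-mass-shift},
\begin{equation*}
0\le M(r_n)=M(f_n)-4\arg z_n .
\end{equation*}
We have $M(f_n)\to M(Q_\rho)=4\pi$ and $\arg z_n<\pi$. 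It follows that $\arg z_n\to\pi$ and $M(r_n)\to0$.

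\emph{Step 2: $r_n\to0$ in $H^1$.} By \eqref{eq:backlund-hamiltonian-shift},
\begin{equation*}
\mathcal H_2(r_n)=\mathcal H_2(f_n)-8|z_n|^2\sin(2\arg z_n).
\end{equation*}
The last term tends to $0$ because $|z_n|$ is bounded and $\arg z_n\to\pi$. The functional $\mathcal H_2$ is continuous on $H^1(\R)$, so $\mathcal H_2(f_n)\to\mathcal H_2(Q_\rho)$. I would check by direct computation (or from the $k^{-2}$ coefficient of $\log a(k;Q_\rho)\equiv0$, via \eqref{eq:algebraic-profile-trivial-determinant} and the trace expansion) that $\mathcal H_2(Q_\rho)=0$. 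Hence $\mathcal H_2(r_n)\to0$.

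For the small-mass coercivity, Gagliardo--Nirenberg gives $\|r\|_\infty^2\le\|r\|_2\|r_x\|_2$. Therefore
\begin{equation*}
\Bigl|\int|r|^2 r\bar r_x\Bigr|\le \|r\|_\infty^2\|r\|_2\|r_x\|_2\le M(r)\,\|r_x\|_2^2,
\end{equation*}
and the $|r|^6$ term is nonnegative. This yields $\mathcal H_2(r)\ge(1-\tfrac32M(r))\|r_x\|_2^2$. For large $n$ we conclude $\|\partial_x r_n\|_2\to0$, and together with $M(r_n)\to0$ this gives $r_n\to0$ in $H^1(\R)$.

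\emph{Step 3: identification $z_n\to-\rho$.} By Step 1, every subsequential limit of $z_n$ lies in $(-\infty,0]$. Suppose first a subsequence has $z_n\to-s$ with $s>0$. Then I would invoke the endpoint inverse B\"acklund compactness result announced in the outline, whose proof does not use this proposition. It uses the first-order equations \eqref{eq:backlund-algebraic-differential-identities} for $G_n$ and $S_n$, with $f_n=G_n^{2}\bar G_n\,\overline{G_n}(r_n\bar G_n^{2}-\bar G_nS_n)$ rewritten from $r_n=G_n^2f_n+G_nS_n$, and the density identity \eqref{eq:backlund-density-bound-identities}. Its conclusion is that $f_n$ approaches the orbit $\mathcal O_s$ in $H^1(\R)$. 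Since $f_n\to Q_\rho$, we get $Q_\rho\in\overline{\mathcal O_s}$. Distinct endpoint orbits are at positive $H^1$ distance, because $\|\partial_xQ_s\|_2^2$ scales like $s^2$ and the orbit is closed modulo translations and phase. Hence $s=\rho$.

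The main obstacle is excluding $z_n\to0$. My first attempt would be to run the same reconstruction quantitatively. With $|z_n|\to0$ and $r_n\to0$, the ODE for $G_n$, namely $G_n'=\tfrac i2 G_n(|r_n|^2-|f_n|^2)$ combined with $S_n'$, forces $G_n$ and $S_n$ to vary on length scales of order $|z_n|^{-1}$. Indeed $|S_n|\le4\sqrt{|z_n|}$, and the algebraic identity pins $G_n$ to the circle $|z_n|(G_n+\bar G_n)-\frac14|S_n|^2=2\re z_n$. Consequently $f_n=\bar G_n^2(r_n-G_nS_n)$ would have $\|f_n\|_{L^\infty}\lesssim\|r_n\|_{L^\infty}+\sqrt{|z_n|}\to0$. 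This contradicts the convergence $f_n\to Q_\rho$ in $H^1\subset L^\infty$, since $\|Q_\rho\|_\infty=4\sqrt\rho>0$. Because every subsequence then has a further subsequence converging to $-\rho$, the full sequence satisfies $z_n\to-\rho$, which completes \eqref{eq:endpoint-scale-selection}.
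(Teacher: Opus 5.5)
\textbf{There is a genuine gap in your proposal.}

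\textbf{Step 1 is a non sequitur.} The mass shift identity gives $4\arg z_n = M(f_n)-M(r_n)\le M(f_n)$. Together with $\arg z_n<\pi$, these are both \emph{upper} bounds on $\arg z_n$. Nothing you wrote excludes, say, $z_n\to z_*$ with $\arg z_*=3\pi/4$, in which case $M(r_n)\to\pi$. So neither $\arg z_n\to\pi$ nor $M(r_n)\to0$ is established. Everything downstream inherits this: Step 2 needs $\sin(2\arg z_n)\to0$ and small mass for the coercivity, and Step 3 opens with ``every subsequential limit lies in $(-\infty,0]$.''

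The missing idea is to exclude interior accumulation points using the spectral data. If $z_{n}\to z_*\in\C^+$ along a subsequence, then Lemma~\ref{lem:determinant-continuity} and $a(\cdot;Q_\rho)\equiv1$ give $0=a(z_n;f_n)\to a(z_*;Q_\rho)=1$, a contradiction. This covers the imaginary axis as well. Only after this, and after excluding $z_*=0$, does $\arg z_n\to\pi$ follow, because the limits are then of the form $-\wt\rho$ with $\wt\rho>0$.

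\textbf{Your exclusion of $z_n\to0$ is circular.} It uses $\|r_n\|_{L^\infty}\to0$, which you obtained in Step 2 from $\arg z_n\to\pi$. That is exactly what is unavailable when $z_n\to0$, since the argument of a sequence tending to $0$ is uncontrolled. You need a contradiction that does not presuppose smallness of $r_n$.

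The paper argues as follows. Since $r_n-G_n^2f_n=G_nS_n$ and $\|S_n\|_\infty\le4\sqrt{|z_n|}$, one has $\|r_n-G_n^2f_n\|_{L^2(-R,R)}\le4\sqrt{2R}\,|z_n|^{1/2}\to0$. Choosing $R$ with $\int_{-R}^R|Q_\rho|^2>3\pi$ then gives $\liminf M(r_n)>3\pi$. On the other hand, $\arg z_n>\pi/2$ and the mass shift give $\limsup M(r_n)\le2\pi$. Your pointwise bound $|f_n|\le|r_n|+|S_n|$ could be repurposed the same way, as a lower bound on the local mass of $r_n$ rather than an upper bound on $\|f_n\|_\infty$.

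\textbf{What is sound.} Once these two exclusions are inserted, your Steps 2 and 3 go through. The coercivity estimate $\mathcal H_2(r)\ge(1-\tfrac32M(r))\|r_x\|_{L^2}^2$ is correct. Your orbit-separation argument is valid and simpler than the paper's compactness argument: $\|\partial_xQ_s\|_{L^2}^2$ scales like $s^2$ and is invariant under phase and translation, so $Q_\rho\in\overline{\mathcal O_s}$ forces $s=\rho$.
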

	
	The proposition follows once Propositions~\ref{prop:preclassification} and \ref{prop:endpoint-inverse} have been established below.
	
	\subsection{Classification of possible zero limits}
	
	\begin{prop}\label{prop:preclassification}
		Let $f_n\in\Sch(\R)$ satisfy $f_n\to Q_\rho$ in $H^1(\R)$ as $n\to\infty$, let $z_n$ be the unique zero supplied by Lemma~\ref{lem:unique-zero}, and denote
		\begin{equation*}
			r_n:=\mathbf B_{z_n}(f_n).
		\end{equation*}
		Then $\{z_n\}$ is bounded and away from zero. Moreover, for every convergent subsequence $\{z_{n_k}\}\subset \{z_n\}$, we have
		\begin{equation}\label{eq:spectral-zero-preclassification}
			z_{n_k}\to-\widetilde\rho\text{ for some }\widetilde\rho>0, \text{ and } r_{n_k}\to0\text{ in }H^1 \text{ as }n_k\to \infty.
		\end{equation}
	\end{prop}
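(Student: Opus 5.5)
Boundedness and separation from zero come first. For boundedness, apply Lemma~\ref{lem:perturbation-determinant}(iv) to the family $\mathcal F=\{f_n\}$. This family is $L^2$-equicontinuous under translation because it converges in $H^1$, hence in $L^2$. So every zero $z_n\in\Sigma_{\rm II}$ satisfies $|z_n|\le N/\varepsilon^2$ or $\arg z_n\le C\varepsilon$. The second alternative is impossible once $C\varepsilon<\pi/2$, so $|z_n|$ is bounded.

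For the lower bound we use the mass shift \eqref{eq:backlund-mass-shift}. It gives $M(r_n)=M(f_n)-4\arg z_n\ge0$, hence $\arg z_n\le M(f_n)/4\to\pi$. We will also show $\arg z_n\to\pi$; see below. To keep $z_n$ away from $0$, we argue by contradiction. Suppose $z_n\to0$ along a subsequence. The Hamiltonian shift \eqref{eq:backlund-hamiltonian-shift} then gives $\mathcal H_2(r_n)=\mathcal H_2(f_n)-8\im(z_n^2)\to\mathcal H_2(Q_\rho)=0$; here $\mathcal H_2(Q_\rho)=0$ would be verified directly, or via the $k^{-2}$ coefficient of $\log a(k;Q_\rho)\equiv0$. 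Combined with small mass this yields $r_n\to0$ in $H^1$, as explained next. We then need the inverse step to reach a contradiction: reconstructing $f_n$ from $r_n\to0$ and $z_n\to0$ via \eqref{eq:backlund-algebraic-differential-identities} should force $f_n\to0$ in some weak or local sense, contradicting $f_n\to Q_\rho$. Concretely, $\|S_n\|_\infty\le4\sqrt{|z_n|}\to0$ and $S_n'=\frac{2|z_n|}{i}(r_n-f_n)$. A cleaner route is $|f_n|^2-|r_n|^2=-2i\bar G_nG_n'$. Integrating over a compact interval, $\int_I|f_n|^2$ is controlled by the variation of $\arg G_n$ on $I$. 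The identity $|z|(G+\bar G)-\frac14|S|^2=2\re z$ gives $\re G_n=\cos\varphi_n+|S_n|^2/(8|z_n|)$, with $|S_n|^2/|z_n|\le16\sin^2(\varphi_n/2)$. We would extract from this that $G_n$ concentrates, which forces the mass of $f_n$ to escape every fixed interval; this contradicts $f_n\to Q_\rho$ in $L^2$.

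Next, smallness of $r_n$ in $H^1$ for any convergent subsequence $z_{n_k}\to z_*\ne0$. The key point is that $\arg z_*=\pi$. Granting it, $z_*=-\widetilde\rho$ with $\widetilde\rho>0$. Then $M(r_{n_k})=M(f_{n_k})-4\arg z_{n_k}\to4\pi-4\pi=0$, and $\mathcal H_2(r_{n_k})\to0-8\im(z_*^2)=0$. For small mass we would estimate the cubic term of $\mathcal H_2$ by Gagliardo--Nirenberg:
\[
\Bigl|\int|f|^2f\bar f_x\Bigr|\le\|f\|_{L^6}^3\|f_x\|_{L^2}\lesssim\|f\|_{L^2}^2\|f_x\|_{L^2}^2 .
\]
This gives $\mathcal H_2(f)\ge(1-CM(f))\|f_x\|_{L^2}^2$, and therefore $\|\partial_xr_{n_k}\|_{L^2}\to0$.

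The main obstacle is showing $\arg z_*=\pi$, that is, excluding a limit in the open quadrant $\Sigma_{\rm II}$. My plan is Hurwitz's theorem. By Lemma~\ref{lem:determinant-continuity}, $a(\cdot;f_n)\to a(\cdot;Q_\rho)\equiv1$ locally uniformly on $\C^+$. Thus zeros $z_n$ cannot accumulate at any interior point of $\C^+$, so any nonzero limit lies on $\R\setminus\{0\}$. Since $\arg z_n\in(\pi/2,\pi)$, the only possible limit direction is the negative real axis. This also gives the claim $\arg z_n\to\pi$ used above, because $z_n$ is bounded and away from $0$. The separation from zero in the first step remains the most delicate part. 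If the $G_n$-concentration argument fails, I would instead try the scaling $a(k;\sqrt h f(h\cdot))=a(k/h;f)$: rescale so that $|z_n|=1$. The rescaled $f_n$ then concentrate to a Dirac-like profile, with $h_n\to0$ dilating. I would derive the contradiction from the $H^1$ bound, since $\|\partial_x\|$ scales badly under that dilation, while the rescaled $a$ still tends to $1$ on compacts.
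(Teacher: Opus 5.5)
\textbf{Verdict: genuine gap.} The gap is in excluding $z_*=0$. Everything else matches the paper: boundedness via Lemma~\ref{lem:perturbation-determinant}(iv), exclusion of limits inside $\C^+$ via continuity and $a(\cdot;Q_\rho)\equiv1$, and $r_{n_k}\to0$ once $z_{n_k}\to-\wt\rho$ (mass shift, $\mathcal H_2(Q_\rho)=0$, Hamiltonian shift, small-mass Gagliardo--Nirenberg coercivity).

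Your main argument against $z_*=0$ is circular. If $z_n\to0$, the mass shift only gives $M(r_n)=M(f_n)-4\arg z_n\in[0,2\pi+o(1)]$, because a priori $\arg z_n$ may stay anywhere in $(\pi/2,\pi)$. You need $\arg z_n\to\pi$ for the ``small mass'' step, but you derive it later from Hurwitz \emph{using} that $z_n$ stays away from $0$. So at this point $\mathcal H_2(r_n)\to0$ does not give $\|\partial_xr_n\|_{L^2}\to0$, and $r_n\to0$ is not available. The ``$G_n$ concentrates'' and rescaling routes are intentions, not arguments. In the rescaling, $h_n=1/|z_n|\to\infty$, and the rescaled determinants vanish at $z_n/|z_n|\in\C^+$. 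So they need not tend to $1$ on all compacts.

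The missing step is short and uses only $\arg z_n>\pi/2$. You already have $\|S_n\|_\infty\le4\sqrt{|z_n|}$. Since $r_n-G_n^2f_n=G_nS_n$ and $|G_n|=1$, you get $\|r_n-G_n^2f_n\|_{L^2(-R,R)}\le4(2R)^{1/2}|z_n|^{1/2}\to0$, so $r_n$ inherits the local mass of $f_n$. Choose $R$ with $\int_{-R}^R|Q_\rho|^2\,dx>3\pi$; then $\liminf M(r_n)>3\pi$. But \eqref{eq:backlund-mass-shift} with $\arg z_n>\pi/2$ gives $\limsup M(r_n)\le2\pi$, a contradiction. This is the paper's argument.

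Your identity $|f_n|^2-|r_n|^2=-2i\bar G_nG_n'$ leads to the same place if completed. Since $\|S_n\|_\infty\to0$, the local masses of $f_n$ and $r_n$ agree asymptotically. The contradiction then comes from the global mass budget, not from mass of $f_n$ escaping.

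Alternatively, you can break the circularity directly. Lemma~\ref{lem:perturbation-determinant}(ii) gives $\|k\Lambda\Gamma\|_{\mathfrak I_1}\lesssim\frac{|k|}{\im k}\|f\|_{L^2}^2$. Hence the Lipschitz bound of Lemma~\ref{lem:uniform-det} holds uniformly on cones $\{\delta\le\arg k\le\pi-\delta\}$, which forces $\arg z_n\to\pi$ with no information on $|z_n|$. Then $M(r_n)\to0$, and $|f_n|\le|r_n|+|S_n|$ rules out $z_n\to0$. Once $z_*\neq0$ is established, the rest of your plan goes through as written.
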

	
	\begin{proof}
		Set
		\begin{equation*}
			\mathcal F:=\{f_1,f_2,\ldots\}.
		\end{equation*}
		Since $f_n\to Q_\rho$ in $L^2(\R)$, the set $\mathcal F$ is relatively compact in $L^2(\R)$. Moreover,
		\begin{equation*}
			\sup_n\|f_n(\cdot+h)-f_n\|_{L^2}
			\leq |h|\sup_n\|\partial_xf_n\|_{L^2}\to0\quad(\text{as }h\to0).
		\end{equation*}
		Lemma~\ref{lem:perturbation-determinant}(iv) yields, for every sufficiently small $\varepsilon>0$, a number $N=N(\varepsilon,\mathcal F)>0$ such that
		\begin{equation}\label{eq:spectral-zero-location-alternative}
			|z|\leq\frac N{\varepsilon^2}\quad\text{or}\quad \arg z\leq C_{M(\mathcal F)}\varepsilon.
		\end{equation}
		Choose $\varepsilon$ sufficiently small that $C_{M(\mathcal F)}\varepsilon<\pi/2$. Since $z_n\in\Sigma_{\rm II}$, the second alternative in \eqref{eq:spectral-zero-location-alternative} is impossible. Hence
		\begin{equation*}
			|z_n|\leq C.
		\end{equation*}
		
		Let $z_{n_k}\to z_*$ as $n_k\to\infty$ along an arbitrary convergent subsequence. We show that $z_*=-\wt{\rho}$ for some $\wt{\rho}>0$. If $z_*=0$, by \eqref{eq:backlund-density-bound-identities}, for every fixed $R>0$,
		\begin{equation}\label{eq:local-backlund-source-bound}
			\|r_{n_k}-G_{n_k}^2f_{n_k}\|_{L^2(-R,R)}=\|S_{n_k}\|_{L^2(-R,R)}\leq4(2R)^{1/2}|z_{n_k}|^{1/2}\to 0.
		\end{equation}
		Since $M(Q_{\rho})=4\pi$, we can choose $R$ so large that
		\begin{equation*}
			\int_{-R}^R|Q_\rho|^2\,dx>3\pi.
		\end{equation*}
		Then \eqref{eq:local-backlund-source-bound}, $|G_{n_k}|=1$, and $f_{n_k}\to Q_\rho$ in $L^2$ as $n_k\to\infty$ imply
		\begin{equation*}
			\liminf_{n_k\to\infty}M(r_{n_k})>3\pi.
		\end{equation*}
		On the other hand, by \eqref{eq:backlund-mass-shift} and $\arg z_{n_k}>\pi/2$, we obtain
		\begin{equation*}
			\limsup_{n_k\to\infty}M(r_{n_k})\leq4\pi-4\frac\pi2=2\pi.
		\end{equation*}
		This is a contradiction. Hence $z_* \neq 0$ and 
		\begin{equation}\label{eq:spectral-zero-two-sided-bound}
			0<c_0\leq|z_n|\leq C.
		\end{equation}
		
		If $z_*\in\C^+$, by Lemma \ref{lem:determinant-continuity} and \eqref{eq:algebraic-profile-trivial-determinant},
		\begin{equation*}
			0=\lim_{n_k\to\infty}a(z_{n_k};f_{n_k})=a(z_*;Q_\rho)=1,
		\end{equation*}
		which is impossible. Therefore, we obtain $z_*=-\widetilde\rho$ for some $\widetilde\rho>0$.
		
		Next, we prove that $r_{n_k}\to0$ in $H^1(\R)$ as $n_k\to\infty$. It follows from \eqref{eq:backlund-mass-shift} that
		\begin{equation}\label{eq:removed-field-mass-vanishing}
			M(r_{n_k})=M(f_{n_k})-4\arg z_{n_k}\to 0 \quad\text{as } n_k\to \infty.
		\end{equation}
		A direct calculation shows that
		\begin{equation}\label{eq:algebraic-profile-hamiltonian-vanishing}
			\mathcal H_2(Q_\rho)=0,
		\end{equation}
		where $\mathcal{H}_2$ is defined in \eqref{eq:second-hamiltonian}. Since $f_{n_k}\to Q_\rho$ in $H^1$ as $n_k\to\infty$,
		\begin{equation*}
			\mathcal H_2(f_{n_k})\to\mathcal H_2(Q_\rho)=0 \quad \text{as }n_k\to \infty.
		\end{equation*}
		By \eqref{eq:backlund-hamiltonian-shift},
		\begin{equation}\label{eq:removed-field-hamiltonian-vanishing}
			\mathcal H_2(r_{n_k})=\mathcal H_2(f_{n_k})-8\im(z_{n_k}^2)\to0 \quad \text{as }n_k\to\infty.
		\end{equation}
		By H\"older's and Gagliardo--Nirenberg's inequalities,
		\begin{equation*}
			\left|\frac{3i}{4}\int_\R|r_{n_k}|^2(r_{n_k}\overline{\partial_x r_{n_k}}-\overline{ r_{n_k}} \partial_x r_{n_k})\,dx\right|\leq\frac32\|r_{n_k}\|_6^3\|\partial_x r_{n_k}\|_{L^2}\leq CM(r_{n_k})\|\partial_x r_{n_k}\|_{L^2}^2.
		\end{equation*}
		Thus, 
		\begin{equation*}
			\bigl|\mathcal H_2(r_{n_k})-\|\partial_xr_{n_k}\|_{L^2}^2\bigr|
			\leq C\bigl(M(r_{n_k})+M(r_{n_k})^2\bigr)\|\partial_xr_{n_k}\|_{L^2}^2.
		\end{equation*}
		For sufficiently large $n_k$, by \eqref{eq:removed-field-mass-vanishing},
		\begin{equation}\label{eq:small-mass-hamiltonian-coercivity}
			\frac12\|\partial_x r_{n_k}\|_{L^2}^2 \leq \mathcal H_2(r_{n_k})\leq\frac32\|\partial_x r_{n_k}\|_{L^2}^2.
		\end{equation}
		It follows from \eqref{eq:removed-field-mass-vanishing}, \eqref{eq:removed-field-hamiltonian-vanishing} and \eqref{eq:small-mass-hamiltonian-coercivity} that $r_{n_k}\to 0 $ in $H^1$ as $n_k\to\infty$.
	\end{proof}
	
	We have not shown that $\wt{\rho}=\rho$. To prove this identity, we use the following converse to Proposition~\ref{prop:preclassification}.

	\subsection{Endpoint inverse B\"acklund compactness}

	\begin{prop}\label{prop:endpoint-inverse}
		Let $f_n\in \mathcal{S}(\R)$, and let $z_n\in \C^+$ be a zero of $a(z;f_n)$. Assume that $z_n\to-\wt{\rho}$ for some $\wt{\rho}>0$ as $n\to\infty$. If
		\begin{equation}\label{eq:removed-field-energy-smallness}
			r_n:=\mathbf B_{z_n}(f_n)\to0\quad\text{in }H^1(\R) \text{ as }n\to \infty,
		\end{equation}
		then
		\begin{equation}\label{eq:endpoint-inverse-orbit-convergence}
			\operatorname{dist}_{H^1}(f_n,\mathcal O_{\wt\rho})\to0\quad \text{as }n\to\infty.
		\end{equation}
	\end{prop}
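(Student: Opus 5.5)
Since $r_n = G_n^2 f_n + G_n S_n$ with $|G_n| = 1$, we may invert: $f_n = \bar G_n^2 r_n - \bar G_n S_n$. Because $r_n \to 0$ in $H^1(\R)$, it suffices to show that $-\bar G_n S_n$ converges, modulo phase and translation, to an element of $\mathcal O_{\wt\rho}$ in $H^1(\R)$. The plan is to treat the pair $(G_n, S_n)$ as the reconstruction data. By \eqref{eq:backlund-algebraic-differential-identities} and \eqref{eq:backlund-density-bound-identities}, these data satisfy a closed first-order system driven by $r_n$:
\begin{equation*}
S_n' = \frac{2|z_n|}{i}\bigl(G_n^2 f_n + G_n S_n - f_n\bigr),\qquad -2i\bar G_n G_n' = |f_n|^2 - |r_n|^2 .
\end{equation*}
Substituting $f_n = \bar G_n^2 r_n - \bar G_n S_n$ turns this into an ODE system for $(G_n, S_n)$ whose coefficients depend only on $z_n$ and $r_n$. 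The constraint is the algebraic identity $|z_n|(G_n + \bar G_n) - \tfrac14|S_n|^2 = 2\re z_n$, and the boundary values are $G_n(\mp\infty) = e^{\mp i\varphi_n}$ with $\varphi_n \to \pi$.

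\textbf{Step 1 (normalization and local convergence).} Since $G_n(-\infty) = e^{-i\varphi_n}$ and $G_n(+\infty) = e^{i\varphi_n}$, the argument of $G_n$ must pass through $0$. I would choose a translation $y_n$ where $\arg G_n(y_n) = 0$, which gives $G_n(y_n) = 1$, and fix the phase so that $S_n(y_n)$ lies on a fixed ray; the constraint then determines $|S_n(y_n)|$. The bound $\|S_n\|_\infty \le 4\sqrt{|z_n|}$ makes the data uniformly bounded. Since $r_n \to 0$ in $H^1 \subset L^\infty$ and $z_n \to -\wt\rho$, the ODEs give equicontinuity, so by Arzel\`a--Ascoli the shifted data converge locally uniformly in $C^1$. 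The limit solves the limit system with $r = 0$ and $z = -\wt\rho$:
\begin{equation*}
S' = 2i\wt\rho\,\bigl(G S - \bar G S\bigr)\ \text{(in appropriate form)},\qquad \wt\rho\,(G + \bar G) - \tfrac14|S|^2 = -2\wt\rho .
\end{equation*}
I will solve this explicitly. It is the degenerate (algebraic) Bäcklund transform of the zero potential at the boundary point $-\wt\rho$, so $f_\infty := -\bar G S$ should equal $e^{i\gamma} Q_{\wt\rho}$. I would verify this by checking that $e^{i\gamma}Q_{\wt\rho}$, paired with its rational $G$, satisfies the limiting system with the same normalization at $x = 0$, and then invoke ODE uniqueness. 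This gives $f_n(\cdot + y_n) \to e^{i\gamma} Q_{\wt\rho}$ in $H^1_{\mathrm{loc}}$.

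\textbf{Step 2 (globalization), which I expect to be the main obstacle.} Local convergence can lose mass at infinity, and the $|x|^{-1}$ decay of $Q_{\wt\rho}$ makes tail control delicate. I would use the mass identity $M(f_n) = M(r_n) + 4\arg z_n \to 4\pi = M(Q_{\wt\rho})$. Together with local $L^2$ convergence, this gives $L^2$ convergence by the Brezis--Lieb / no-mass-escape argument. For the derivative, I would use the first equation: $f_n' = (\bar G_n^2 r_n)' - (\bar G_n S_n)'$, where $G_n'$ and $S_n'$ are expressed through $f_n$ and $r_n$ times bounded factors. This gives $\|f_n' - f_\infty'(\cdot - y_n)\|_{L^2}$ controlled by $\|r_n\|_{H^1}$ plus $L^2$ differences of $f_n$ and $|f_n|^2 f_n$-type terms, which converge by the $L^2$ convergence and uniform $L^\infty$ bounds. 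The hard point is to show that the tails of $\bar G_n S_n - f_\infty$ are small: one must show that $G_n \approx 1$ is not re-entered far out (a second ``kink'' would carry mass about $4\pi$, contradicting the mass count), so that outside $[-R, R]$ both $f_n$ and $f_\infty$ have small $L^2$ mass. This yields \eqref{eq:endpoint-inverse-orbit-convergence} along every subsequence, and hence for the full sequence.
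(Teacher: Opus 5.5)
Your proposal is correct and follows essentially the paper's route. You invert $f_n=\bar G_n^2r_n-\bar G_nS_n$, normalize by translation and phase at the point where $G_n=1$, and pass to the explicit limit $(G_*,S_*)$ of the first-order system, which gives $-\bar G_*S_*=-Q_{\wt\rho}$. You then globalize with the mass identity; the paper uses it in the form $\|S_n\|_{L^2}^2\to4\pi=\|S_*\|_{L^2}^2$, and it gets local convergence by Gronwall rather than Arzel\`a--Ascoli.

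Two small corrections. The limiting equation is $S'=-2i\wt\rho\,\bar G S$, which you get by substituting $r=0$, $f=-\bar GS$ into your own first display. Also, the tail issue you flag as the main obstacle does not arise: once $L^2$ convergence is known, the only remaining $G_n$-dependent terms in the derivative estimate have the form $(\bar G_n-\bar G_*)h$ with $h$ a fixed $L^2$ function, and these tend to zero by local uniform convergence of $G_n$ and the bound $|G_n-G_*|\le 2$.
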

	
	The proof is divided into Lemmas~\ref{lem:endpoint-phase-reduction}--\ref{lem:endpoint-global-reconstruction}.
	
	\begin{lem}\label{lem:endpoint-phase-reduction}
		Under the assumptions of Proposition~\ref{prop:endpoint-inverse}, let $G_n,S_n$ be the B\"acklund data associated with $z_n,f_n$ given in Definition \ref{defn:backlund-transform}, and write $z_n=|z_n|e^{i\varphi_n}$, $0<\varphi_n<\pi$. Then there exists a unique continuous function $\theta_n:\R\to(-\pi,\pi)$ such that
		\begin{equation*}
			G_n=e^{i\theta_n},\quad
			\theta_n(-\infty)=-\varphi_n,\quad
			\theta_n(+\infty)=\varphi_n.
		\end{equation*}
		Define
		\begin{equation*}
			p_n:=-\bar G_nS_n.
		\end{equation*}
		Then
		\begin{equation}\label{eq:phase-reduction-remainder-smallness}
			f_n-p_n\to0\quad\text{in }H^1(\R) \text{ as }n\to \infty.
		\end{equation}
	\end{lem}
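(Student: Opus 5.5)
The lemma has two parts: the phase lift $\theta_n$, and the smallness of $f_n-p_n$. I would prove the first from the properties of $G_n$ in Lemma~\ref{lem:backlund-identities}, and the second by inverting the B\"acklund formula.

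\textbf{The phase lift.} Since $|G_n|=1$, $G_n$ is continuous and $\R$ is simply connected, $G_n$ has a continuous logarithm $e^{i\theta_n}$. This logarithm is unique up to adding $2\pi\mathbb Z$. To confine it to $(-\pi,\pi)$, I would use that $G_n=\bar d_n/d_n$ with $\re d_n>0$. Thus $d_n$ lies in the right half-plane, so $\arg d_n\in(-\pi/2,\pi/2)$. Then $G_n=e^{-2i\arg d_n}$, and I define
\[
\theta_n:=-2\arg d_n\in(-\pi,\pi).
\]
This choice is continuous, and it is the unique lift with values in $(-\pi,\pi)$: two lifts differ by a constant in $2\pi\mathbb Z$, and both lie in an interval of length $2\pi$. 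The boundary limits $G_n(\pm\infty)=e^{\pm i\varphi_n}$ with $\varphi_n\in(0,\pi)$ then force $\theta_n(\pm\infty)=\pm\varphi_n$, because $\theta_n(\pm\infty)$ must lie in $[-\pi,\pi]$ and equal $\pm\varphi_n$ modulo $2\pi$.

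\textbf{Inverting the transform.} From $r_n=G_n^2f_n+G_nS_n$ and $|G_n|=1$ we get
\[
f_n=\bar G_n^2 r_n-\bar G_nS_n=\bar G_n^2r_n+p_n .
\]
Hence $f_n-p_n=\bar G_n^2 r_n$. In $L^2$ this is immediate: $\|\bar G_n^2r_n\|_{L^2}=\|r_n\|_{L^2}\to0$. For the derivative,
\[
\partial_x(\bar G_n^2r_n)=\bar G_n^2\partial_xr_n-2\bar G_n^2(\bar G_nG_n')r_n .
\]
The first term tends to $0$ in $L^2$. For the second I would use \eqref{eq:backlund-density-bound-identities}:
\[
\bar G_nG_n'=\tfrac{i}{2}\bigl(|f_n|^2-|r_n|^2\bigr).
\]
So the second term is bounded by $\|(|f_n|^2+|r_n|^2)r_n\|_{L^2}\le(\|f_n\|_{L^\infty}^2+\|r_n\|_{L^\infty}^2)\|r_n\|_{L^2}$.

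\textbf{Main obstacle: a uniform $L^\infty$ bound on $f_n$.} The hypotheses of Proposition~\ref{prop:endpoint-inverse} control $r_n$ but not $f_n$ directly. I would obtain the bound from $f_n=\bar G_n^2r_n+p_n$:
\[
\|f_n\|_{L^\infty}\le\|r_n\|_{L^\infty}+\|S_n\|_{L^\infty}\le C\|r_n\|_{H^1}+4\sqrt{|z_n|}.
\]
Here $\|S_n\|_{L^\infty}\le 4\sqrt{|z_n|}$ by \eqref{eq:backlund-density-bound-identities}, and this is bounded because $z_n$ converges. The Sobolev embedding gives $\|r_n\|_{L^\infty}\lesssim\|r_n\|_{H^1}\to0$. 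Therefore the second term is $O(\|r_n\|_{L^2})\to0$, which gives $f_n-p_n\to0$ in $H^1(\R)$.
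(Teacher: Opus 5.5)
Your proof is correct and follows the paper's approach. The lift comes from $\re d_n>0$ (your explicit $\theta_n=-2\arg d_n$ is the paper's lift of $G_n\in\mathbb S^1\setminus\{-1\}$), and the $H^1$ smallness comes from $f_n-p_n=\bar G_n^2r_n$ together with $\bar G_nG_n'=\tfrac{i}{2}(|f_n|^2-|r_n|^2)$. The only difference is the H\"older pairing for the derivative term. You bound $G_n'$ in $L^\infty$ via $\|f_n\|_\infty\le\|r_n\|_\infty+4\sqrt{|z_n|}$ and pair it with $\|r_n\|_{L^2}$. The paper bounds $G_n'$ in $L^2$, using $\|S_n\|_{L^2}^2\to4\pi$ from the mass shift, and pairs it with $\|r_n\|_{L^\infty}$. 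Your version avoids the mass identity, while the paper's also records the phase equation and $e_n$ bounds that Lemma~\ref{lem:endpoint-centered-local-limit} reuses.
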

	
	\begin{proof}
		Let $\mu_n=\sqrt{z_n}$ and let $d_n$ be defined in Definition \ref{defn:backlund-transform}. Since $d_n\neq0$ and $G_n=\bar d_n/d_n\in\mathbb S^1\setminus\{-1\}$, there exists a unique continuous lift $\theta_n$ such that
		\begin{equation}\label{eq:G-n-theta-n}
			G_n=e^{i\theta_n},\quad \theta_n(-\infty)=-\varphi_n,\quad \theta_n(+\infty)=\varphi_n,\quad -\pi<\theta_n<\pi.
		\end{equation}
		We also have $|\theta_n|\leq \varphi_n$. Indeed, by \eqref{eq:backlund-algebraic-differential-identities},
		\begin{equation}\label{eq:backlund-source-amplitude-identity}
			0\leq |S_n|^2 = 8|z_n|(\cos\theta_n-\cos\varphi_n),
		\end{equation}
		which implies that $|\theta_n|\leq \varphi_n$ and $\|S_n\|_\infty\leq4\sqrt{|z_n|}$.
		
		By \eqref{eq:backlund-transform-data},
		\begin{equation}\label{eq:inverse-backlund-decomposition}
			f_n-p_n=\bar G_n^2r_n.
		\end{equation}
		Since $r_n\to 0$ in $H^1(\R)$ as $n\to\infty$ and $|G_n|=1$,
		\begin{equation}\label{pn-qn-2}
			\|f_n-p_n\|_{L^2}\to 0 \quad\text{as }n\to\infty.
		\end{equation}
		By \eqref{eq:G-n-theta-n}, $\partial_x\big(\bar G_n^2r_n\big)=-2i\theta_n'\bar{G}_n^2r_n + \bar{G}_n^2r_n'$. By \eqref{eq:backlund-transform-data}, we also have
		\begin{equation}\label{eq:backlund-mass-density-decomposition}
			|f_n|^2-|r_n|^2=|S_n|^2-2\re(\bar G_nr_n\bar S_n).
		\end{equation}
		It follows from \eqref{eq:backlund-density-bound-identities} and \eqref{eq:backlund-mass-density-decomposition} that
		\begin{equation}\label{eq:backlund-phase-equation}
			\theta_n'=\frac12|S_n|^2-\re(\bar G_nr_n\bar S_n)=4|z_n|(\cos\theta_n-\cos\varphi_n)+e_n,
		\end{equation}
		where
		\begin{equation*}
			e_n=-\re(\bar G_nr_n\bar S_n),\quad |e_n|\leq|S_n||r_n|.
		\end{equation*}
		Since $z_n\to -\wt{\rho}$ as $n\to\infty$, we have $\varphi_n\to \pi$ as $n\to\infty$. By \eqref{eq:backlund-mass-shift},
		\begin{equation*}
			\|f_n\|_{L^2}^2=\|r_n\|_{L^2}^2+4\varphi_n\to4\pi \quad\text{as }n\to\infty.
		\end{equation*}
		Then,
		\begin{equation}\label{eq:Sn-L2}
			\|S_n\|^2_{L^2}=\|\bar G_nr_n-G_nf_n\|^2_{L^2}\to 4\pi \quad\text{as }n\to\infty.
		\end{equation}
		Hence
		\begin{equation*}
			\|e_n\|_1\leq\|S_n\|_{L^2}\|r_n\|_{L^2}\to0\quad\text{as }n\to\infty.
		\end{equation*}
		Similarly,
		\begin{equation*}
			\|e_n\|_{L^2}\leq\|S_n\|_\infty\|r_n\|_{L^2}\to0\quad\text{as }n\to\infty,
		\end{equation*}
		and
		\begin{equation}\label{eq:backlund-phase-derivative-bound}
			\|G_n'\|_{L^2}=\|\theta_n'\|_{L^2}
			\leq\frac12\||S_n|^2\|_{L^2}+\|e_n\|_{L^2}\leq\frac12\|S_n\|_\infty\|S_n\|_{L^2}+o(1)\leq C_{\wt{\rho}}.
		\end{equation}
		By Sobolev's inequality, $\|r_n\|_{L^{\infty}}\to 0$ as $n\to\infty$, and then
		\begin{equation}\label{eq:inverse-profile-remainder-smallness}
			\|\partial_x(f_n-p_n)\|_{L^2}=\|\partial_x\big(\bar{G}_n^2r_n\big)\|_{L^2}\leq 2\|\theta_n'\|_{L^2}\|r_n\|_\infty + \|r_n'\|_{L^2} \to0,
		\end{equation}
		as $n\to\infty$. Therefore, \eqref{eq:phase-reduction-remainder-smallness} follows from \eqref{pn-qn-2} and \eqref{eq:inverse-profile-remainder-smallness}.
	\end{proof}
	
	\begin{lem}\label{lem:endpoint-centered-local-limit}
		Under the assumptions and notation of Lemma~\ref{lem:endpoint-phase-reduction}, there exist translations $x_n\in\R$ and phases $\alpha_n\in\R$ such that, after replacing
		\begin{equation*}
			(f_n,r_n,S_n)(x)
			\quad\text{by}\quad
			e^{i\alpha_n}(f_n,r_n,S_n)(x+x_n),
		\end{equation*}
		we have $\theta_n(0)=0$, $S_n(0)>0$, and, as $n\to\infty$,
		\begin{equation}\label{eq:centered-local-limits}
			\theta_n\to\theta_*,\
			G_n\to G_*\text{ locally uniformly},
			\quad S_n\to S_*\text{ in }H^1_{\rm loc},
		\end{equation}
		where
		\begin{equation*}
			\theta_*(x)=2\arctan(4\wt\rho x),\quad
			G_*(x)=\frac{1+4i\wt\rho x}{1-4i\wt\rho x},\quad
			S_*(x)=\frac{4\sqrt{\wt\rho}\,e^{2i\wt\rho x}}{1+4i\wt\rho x}.
		\end{equation*}
	\end{lem}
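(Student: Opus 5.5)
The plan is to derive a closed first-order ODE system for $(\theta_n,S_n)$ with vanishing error terms, normalize by translation and phase, and then pass to the limit by Arzel\`a--Ascoli. The limiting system will be identified as the algebraic-soliton reconstruction ODE, whose normalized solution is unique.

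\textbf{Step 1: the ODE for $S_n$.} From \eqref{eq:backlund-algebraic-differential-identities} and $f_n=\bar G_n^2 r_n + p_n = \bar G_n^2r_n-\bar G_nS_n$ we get
\begin{equation*}
S_n'=\frac{2|z_n|}{i}\bigl(r_n-\bar G_n^2r_n+\bar G_nS_n\bigr)=-2i|z_n|e^{-i\theta_n}S_n+\epsilon_n,
\end{equation*}
where $\epsilon_n:=-2i|z_n|(1-\bar G_n^2)r_n$ tends to $0$ in $H^1(\R)$. This uses $r_n\to0$ in $H^1$ together with the bound \eqref{eq:backlund-phase-derivative-bound} on $G_n'$. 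The phase equation is already \eqref{eq:backlund-phase-equation}:
\begin{equation*}
\theta_n'=4|z_n|(\cos\theta_n-\cos\varphi_n)+e_n,
\end{equation*}
with $\|e_n\|_{L^1}+\|e_n\|_{L^2}\to0$.

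\textbf{Step 2: normalization.} Since $\theta_n$ is continuous and runs from $-\varphi_n$ to $\varphi_n$, we choose $x_n$ with $\theta_n(x_n)=0$ and translate. At that point \eqref{eq:backlund-source-amplitude-identity} gives
\begin{equation*}
|S_n(0)|^2=8|z_n|(1-\cos\varphi_n)\to16\wt\rho>0.
\end{equation*}
Hence $S_n(0)\ne0$, and we choose $\alpha_n$ with $e^{i\alpha_n}S_n(0)>0$. Multiplying $(f_n,r_n,S_n)$ by $e^{i\alpha_n}$ leaves $G_n$ unchanged: $d_n$ is unchanged if $\psi$ is replaced by $(e^{i\alpha_n/2}\psi_1,e^{-i\alpha_n/2}\psi_2)$, which also rotates $S_n$ by $e^{i\alpha_n}$. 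The equations above are invariant under these operations, and the $H^1$ smallness of $r_n$ is preserved.

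\textbf{Step 3: compactness and identification of the limit.} The family $\theta_n$ is uniformly bounded with $\theta_n'$ bounded in $L^2$, hence equicontinuous. The family $S_n$ is bounded in $L^\infty$, and $S_n'$ is bounded in $L^2_{\rm loc}$ by the ODE. By Arzel\`a--Ascoli and a diagonal argument we extract locally uniform limits along a subsequence.

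Passing to the limit, with $|z_n|\to\wt\rho$ and $\cos\varphi_n\to-1$, the integrated phase equation gives
\begin{equation*}
\theta_*'=4\wt\rho(1+\cos\theta_*),\qquad \theta_*(0)=0.
\end{equation*}
Writing $1+\cos\theta=2\cos^2(\theta/2)$, this becomes $\frac{d}{dx}\tan(\theta_*/2)=4\wt\rho$, so $\theta_*=2\arctan(4\wt\rho x)$. From this, $G_*=e^{i\theta_*}=(1+4i\wt\rho x)/(1-4i\wt\rho x)$.

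The limiting $S$-equation is linear:
\begin{equation*}
S_*'=-2i\wt\rho\,\bar G_*S_*,\qquad S_*(0)=4\sqrt{\wt\rho}.
\end{equation*}
Its unique solution can be checked to be the stated $S_*$. Indeed, $-2i\wt\rho\bar G_*=-2i\wt\rho+\frac{d}{dx}\log\frac{1}{1+4i\wt\rho x}$ up to the sign convention on the exponential, so the solution is the stated formula up to verifying the exponent $e^{2i\wt\rho x}$. It is consistent with $|S_*|^2=8\wt\rho(1+\cos\theta_*)$.

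\textbf{Step 4: full-sequence convergence and upgrade.} The limit ODE problems have unique solutions, so every subsequence has a further subsequence with the same limit, and the whole sequence converges. Local $H^1$ convergence of $S_n$ then follows from the ODE: $S_n'-S_*'$ is controlled locally by the uniform convergence of $S_n$ and $\theta_n$ plus $\|\epsilon_n\|_{L^2}$.

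\textbf{Main obstacle.} The delicate point is the passage to the limit in the phase equation. The error $e_n$ is only small in $L^1\cap L^2$, so I would work with the integrated form $\theta_n(x)=\int_0^x(\cdots)$, where $\int|e_n|\to0$ suffices. I would also check carefully that the normalization $\theta_n(0)=0$ does not send $x_n$ to a degenerate region. This is guaranteed because $\theta_n$ crosses $0$ exactly where $|S_n|$ has size approximately $4\sqrt{\wt\rho}$.
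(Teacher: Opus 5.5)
Your proposal is correct. It uses the same ingredients as the paper: the phase equation \eqref{eq:backlund-phase-equation}, the source equation $S_n'=-2i|z_n|\bar G_nS_n+\mathcal R_n$ (your $\epsilon_n$), and the normalization $\theta_n(0)=0$, $S_n(0)>0$ with $|S_n(0)|^2=8|z_n|(1-\cos\varphi_n)\to16\wt\rho$.

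The difference is in how you pass to the limit. The paper subtracts the integrated equations for $\theta_n$ and $\theta_*$ and applies Gronwall on $[0,R]$ and $[-R,0]$. This gives, for the whole sequence and without extraction, the bound $\sup_{|x|\le R}|\theta_n-\theta_*|\le C_{R,\wt\rho}\bigl[\|e_n\|_{L^1(-R,R)}+\bigl||z_n|-\wt\rho\bigr|+|1+\cos\varphi_n|\bigr]$. The paper then repeats the argument for the linear equation for $S_n$. You instead extract locally uniform limits by Arzel\`a--Ascoli, using the equicontinuity from the $L^2$ bounds on $\theta_n'$ and $S_n'$. You identify the limits through the integrated equations and recover convergence of the full sequence from uniqueness of the two limiting Cauchy problems. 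Both arguments rest on the same Lipschitz structure of the right-hand sides. The Gronwall route is shorter and gives a rate; yours is softer and needs the extra equicontinuity bounds, which you do have. Your check that the phase rotation leaves $G_n$ unchanged, via $\psi\mapsto\mathrm{diag}(e^{i\alpha_n/2},e^{-i\alpha_n/2})\psi$, justifies something the paper only asserts.

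One small correction in Step 3: you have a sign wrong. The correct identity is $-2i\wt\rho\,\bar G_*=2i\wt\rho-\frac{4i\wt\rho}{1+4i\wt\rho x}=2i\wt\rho+\frac{d}{dx}\log\frac{1}{1+4i\wt\rho x}$, with $+2i\wt\rho$ rather than $-2i\wt\rho$. Integrating from $S_*(0)=4\sqrt{\wt\rho}$ then gives exactly $S_*=4\sqrt{\wt\rho}\,e^{2i\wt\rho x}/(1+4i\wt\rho x)$, so no hedging about the exponent is needed.
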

	
	\begin{proof}
		Since $\theta_n(-\infty)=-\varphi_n<0<\varphi_n=\theta_n(+\infty)$, there exists $x_n\in\R$ such that $\theta_n(x_n)=0$. By \eqref{eq:backlund-source-amplitude-identity},
		\begin{equation*}
			|S_n(x_n)|^2=8|z_n|(1-\cos\varphi_n)>0.
		\end{equation*}
		Choose $\alpha_n=-\arg S_n(x_n)$. After the translation by $x_n$ and the simultaneous phase rotation by $e^{i\alpha_n}$, we retain the same notation. Then $\theta_n(0)=0$, $S_n(0)>0$, and
		\begin{equation}\label{eq:centered-source-amplitude-limit}
			S_n(0)=\{8|z_n|(1-\cos\varphi_n)\}^{1/2}\to4\sqrt{\wt\rho}\quad\text{as }n\to\infty.
		\end{equation}
		These transformations preserve the B\"acklund identities.
		
		Let $\theta_*$ be the solution of
		\begin{equation*}
			\theta_*'=4\wt\rho(1+\cos\theta_*),\quad \theta_*(0)=0.
		\end{equation*}
		Then
		\begin{equation}
			\theta_*(x)=2\arctan(4\wt\rho x).
		\end{equation}
		Subtracting the equations for $\theta_n$ and $\theta_*$ and integrating from $0$ to $x$, we obtain
		\begin{equation}\label{theta-n-theta*}
			\begin{aligned}
				\theta_n(x)-\theta_*(x)
				&=\int_0^x\Big[
				4|z_n|\bigl(\cos\theta_n(y)-\cos\theta_*(y)\bigr)\\
				&\qquad+4\bigl(|z_n|-\wt\rho\bigr)\bigl(1+\cos\theta_*(y)\bigr)\\
				&\qquad-4|z_n|\bigl(1+\cos\varphi_n\bigr)+e_n(y)
				\Big]\,dy .
			\end{aligned}
		\end{equation}
		Since $|\cos u-\cos v|\leq|u-v|$, Gronwall's inequality on $[0,R]$ and $[-R,0]$ yields
		\begin{equation*}
			\sup_{|x|\leq R}\bigl|\theta_n(x)-\theta_*(x)\bigr|
			\leq C_{R,\wt{\rho}}\bigl[\|e_n\|_{L^1(-R,R)}+\bigl||z_n|-\wt\rho\bigr|+|\cos\varphi_n+1|\bigr]\to0
		\end{equation*}
		as $n\to\infty$. Hence, $\theta_n\to\theta_*$ locally uniformly as $n\to\infty$. Since $G_n=e^{i\theta_n}$,
		\begin{equation}\label{eq:gauge-factor-local-limit}
			G_n\to G_*(x):=\frac{1+4i\wt\rho x}{1-4i\wt\rho x}\quad\text{locally uniformly as }n\to\infty.
		\end{equation}
		
		By \eqref{eq:backlund-algebraic-differential-identities} and \eqref{eq:inverse-backlund-decomposition},
		\begin{equation}\label{eq:backlund-source-spatial-ode}
			S_n'=-2i|z_n|\bar G_nS_n+\mathcal R_n,\quad \mathcal R_n:=-2i|z_n|(1-\bar G_n^2)r_n,
		\end{equation}
		and
		\begin{equation*}
			\|\mathcal R_n\|_{L^2}\leq4|z_n|\|r_n\|_{L^2}\to0 \quad\text{as }n\to\infty.
		\end{equation*}
		Let $S_*$ be the solution of 
		\begin{equation}\label{eq:S*equ}
			S_*'=-2i\wt\rho\bar G_*S_*,\quad S_*(0)=4\sqrt{\wt\rho}.
		\end{equation}
		Then
		\begin{equation}
			S_*(x)=\frac{4\sqrt{\wt\rho}\,e^{2i\wt\rho x}}{1+4i\wt\rho x}.
		\end{equation}	
		The same argument as in \eqref{theta-n-theta*} yields $S_n\to S_*$ locally uniformly as $n\to\infty$. Then \eqref{eq:backlund-source-spatial-ode} yields $S_n'\to S_*'$ in $L^2_{\rm loc}$ as $n\to\infty$. Hence $S_n\to S_*$ in $H^1_{\rm loc}$ as $n\to\infty$.
	\end{proof}
	
	The local convergence can be upgraded to global $H^1$-convergence by the mass identity and the first-order equation for $S_n$.
	
	\begin{lem}\label{lem:endpoint-global-reconstruction}
		Under the assumptions of Lemma~\ref{lem:endpoint-centered-local-limit},
		\begin{equation}\label{eq:global-source-and-profile-reconstruction}
			S_n\to S_*,\quad p_n\to-Q_{\wt\rho}
			\quad\text{in }H^1(\R)\ \text{as }n\to\infty.
		\end{equation}
	\end{lem}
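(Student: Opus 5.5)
The plan is to upgrade the local convergence of Lemma~\ref{lem:endpoint-centered-local-limit} to global convergence in two moves: first show there is no mass loss, which gives $S_n\to S_*$ in $L^2(\R)$; then use the first-order equation \eqref{eq:backlund-source-spatial-ode} to pass to $H^1(\R)$. The convergence of $p_n$ then follows from an algebraic identification of $-\bar G_*S_*$.

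\textbf{Step 1: $L^2$ convergence of $S_n$ without mass loss.} From \eqref{eq:Sn-L2} we have $\|S_n\|_{L^2}^2\to4\pi$. A direct computation gives
\begin{equation*}
\|S_*\|_{L^2}^2=\int_\R\frac{16\wt\rho}{1+16\wt\rho^2x^2}\,dx=4\pi .
\end{equation*}
Since $S_n\to S_*$ in $L^2_{\rm loc}$, for any $R$ we get $\int_{|x|\le R}|S_n|^2\to\int_{|x|\le R}|S_*|^2$. Choosing $R$ large makes the tail of $S_*$ small, so $\limsup_n\int_{|x|>R}|S_n|^2\le\varepsilon$. In a Hilbert space, norm convergence combined with local (hence weak) convergence gives strong convergence; concretely,
\begin{equation*}
\|S_n-S_*\|_{L^2}^2\le\|S_n-S_*\|_{L^2(-R,R)}^2+2\int_{|x|>R}\bigl(|S_n|^2+|S_*|^2\bigr)\,dx .
\end{equation*}
This yields $S_n\to S_*$ in $L^2(\R)$.

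\textbf{Step 2: derivatives.} Using \eqref{eq:backlund-source-spatial-ode} and \eqref{eq:S*equ},
\begin{equation*}
S_n'-S_*'=-2i\bigl(|z_n|-\wt\rho\bigr)\bar G_nS_n-2i\wt\rho\bigl(\bar G_n-\bar G_*\bigr)S_n-2i\wt\rho\,\bar G_*\bigl(S_n-S_*\bigr)+\mathcal R_n .
\end{equation*}
The first, third and last terms tend to $0$ in $L^2$ because $|G|=1$, $\|S_n\|_{L^2}$ is bounded, Step 1 holds, and $\|\mathcal R_n\|_{L^2}\to0$. For the second term, I split $\R$ into $[-R,R]$ and its complement. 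On $[-R,R]$, use the locally uniform convergence $G_n\to G_*$. On $|x|>R$, bound it by $4\wt\rho\,\|S_n\|_{L^2(|x|>R)}$, which is small uniformly in $n$ by the tightness from Step 1. Hence $S_n\to S_*$ in $H^1(\R)$.

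\textbf{Step 3: convergence of $p_n$.} Write
\begin{equation*}
p_n+\bar G_*S_*=-(\bar G_n-\bar G_*)S_n-\bar G_*(S_n-S_*).
\end{equation*}
In $L^2$, this tends to $0$ by the same splitting argument as in Step 2. For the derivative, note $\bar G_n'=-i\theta_n'\bar G_n$, and by \eqref{eq:backlund-phase-equation} we have $\theta_n'=\tfrac12|S_n|^2+e_n$. Here $|S_n|^2\to|S_*|^2$ in $L^2$ (since $S_n\to S_*$ in $L^2\cap L^\infty$, the latter by Sobolev embedding), and $\|e_n\|_{L^2}\to0$. Therefore $\bar G_n'\to\bar G_*'$ in $L^2_{\rm loc}$, with a uniform $L^2$ bound \eqref{eq:backlund-phase-derivative-bound}. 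Then:
\begin{itemize}
\item $(\bar G_n-\bar G_*)'S_n$ is handled by the $L^2_{\rm loc}$ convergence on $[-R,R]$, and on the tails by $\|G_n'\|_{L^2}\|S_n\|_{L^\infty(|x|>R)}$, which is small since $S_n\to S_*$ in $H^1$ and $S_*$ decays.
\item $(\bar G_n-\bar G_*)S_n'$ is handled by the earlier splitting argument.
\end{itemize}
Finally, I compute directly:
\begin{equation*}
-\bar G_*S_*=-\frac{1-4i\wt\rho x}{1+4i\wt\rho x}\cdot\frac{4\sqrt{\wt\rho}\,e^{2i\wt\rho x}}{1+4i\wt\rho x}=-Q_{\wt\rho}.
\end{equation*}

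\textbf{Main obstacle.} The hard part is tightness, that is, excluding mass escaping to infinity. This is settled by matching the exact value $4\pi$ from the mass shift identity with $\|S_*\|_{L^2}^2=4\pi$. It is this matching that lets the merely local convergence of $G_n$ suffice everywhere.
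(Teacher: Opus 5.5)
Your proof is correct and follows the paper's route. You rule out mass loss by matching $\|S_n\|_{L^2}^2\to4\pi=\|S_*\|_{L^2}^2$, pass to derivatives through the first-order equation for $S_n'$, and handle $p_n'$ through the phase equation $\theta_n'=\tfrac12|S_n|^2+e_n$. The differences are cosmetic: you get strong $L^2$ convergence by explicit tail control rather than weak plus norm convergence, and you differentiate $-(\bar G_n-\bar G_*)S_n-\bar G_*(S_n-S_*)$ instead of showing, as the paper does, that $\theta_n'S_n\to\theta_*'S_*$ globally in $L^2$.
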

	
	\begin{proof}
		A direct calculation shows that
		\begin{equation}\label{eq:limiting-source-mass}
			|S_*(x)|^2=\frac{16\wt\rho}{1+16\wt\rho^2x^2},\quad \|S_*\|_{L^2}^2=4\pi.
		\end{equation}
		By Lemma~\ref{lem:endpoint-centered-local-limit}, we have $S_n\rightharpoonup S_*$ in $L^2$ as $n\to\infty$. Thus, it follows from \eqref{eq:Sn-L2} that
		\begin{equation*}
			\|S_n-S_*\|^2_{L^2}=\|S_n\|_{L^2}^2+\|S_*\|_{L^2}^2- 2\re \langle S_n, S_* \rangle \to 0,
		\end{equation*}
		as $n\to\infty$. We obtain that
		\begin{equation}\label{eq:strong-source-mass-convergence}
			S_n\to S_*\quad\text{strongly in }L^2\text{ as }n\to\infty.
		\end{equation}
		For every $R>0$,
		\begin{equation*}
			\|\bar G_nS_n-\bar G_*S_*\|_{L^2}\leq \|S_n-S_*\|_{L^2}+\|(\bar G_n-\bar G_*)S_*\|_{L^2(-R,R)}+2\|S_*\|_{L^2(|x|>R)}.
		\end{equation*}
		Combining this estimate with \eqref{eq:gauge-factor-local-limit} and \eqref{eq:strong-source-mass-convergence}, and then letting $R\to\infty$, we obtain
		\begin{equation*}
			|z_n|\bar G_nS_n\to\wt\rho\bar G_*S_*\quad\text{in }L^2\text{ as }n\to\infty.
		\end{equation*}
		Hence, by \eqref{eq:backlund-source-spatial-ode} and \eqref{eq:S*equ},
		\begin{equation}\label{eq:source-derivative-convergence}
			S_n'\to S_*'\quad\text{in }L^2\text{ as }n\to\infty.
		\end{equation}
		Thus $S_n\to S_*$ in $H^1(\R)$ as $n\to\infty$. Set 
		\begin{equation*}
			p_*:=-\bar G_*S_*=-4\sqrt{\wt\rho}\,e^{2i\wt\rho x}\frac{1-4i\wt\rho x}{(1+4i\wt\rho x)^2}=-Q_{\wt\rho}.
		\end{equation*}
		Then
		\begin{equation*}
			p_n-p_*=-\bar G_n(S_n-S_*)-(\bar G_n-\bar G_*)S_*,
		\end{equation*}
		so \eqref{eq:gauge-factor-local-limit} and \eqref{eq:strong-source-mass-convergence} give $p_n\to p_*$ in $L^2$ as $n\to\infty$. 
		A direct calculation shows that
		\begin{equation}\label{eq:pn'}
			p_*'=-\bar{G}_*S_*'+i\bar{G}_* \theta_*' S_*, \quad p_n'=-\bar G_nS_n'+i\bar G_n\theta_n'S_n.
		\end{equation}
		Furthermore, by \eqref{eq:backlund-phase-equation},
		\begin{equation*}
			\theta_n'S_n=\frac12|S_n|^2S_n+e_nS_n,\quad \|e_nS_n\|_{L^2}\leq\|S_n\|_\infty^2\|r_n\|_{L^2}\to0\quad\text{as }n\to\infty.
		\end{equation*}
		The uniform $L^\infty$ bound of $S_n$ and \eqref{eq:strong-source-mass-convergence} imply
		\begin{equation*}
			\bigl\||S_n|^2S_n-|S_*|^2S_*\bigr\|_{L^2}\leq C_{\wt\rho}\|S_n-S_*\|_{L^2}\to0\quad\text{as }n\to\infty.
		\end{equation*}
		Since $\theta_*'=\frac12|S_*|^2$, it follows that $\theta_n'S_n\to\theta_*'S_*$ in $L^2$ as $n\to\infty$. Thus, by \eqref{eq:pn'}, we have $p_n'\to p_*'$ in $L^2$ as $n\to\infty$. Therefore,
		\begin{equation*}
			p_n\to p_* = -Q_{\wt{\rho}}\quad\text{in }H^1(\R)\text{ as }n\to\infty.
		\end{equation*}
	\end{proof}
	
	We are now in a position to prove Proposition \ref{prop:endpoint-inverse}.
	\begin{proof}[Proof of Proposition~\ref{prop:endpoint-inverse}]
		Apply the translations and phases from Lemma~\ref{lem:endpoint-centered-local-limit}. Since these symmetries preserve the distance to $\mathcal O_{\wt\rho}$, Lemmas~\ref{lem:endpoint-phase-reduction} and \ref{lem:endpoint-global-reconstruction} imply \eqref{eq:endpoint-inverse-orbit-convergence}.
	\end{proof}

	\begin{proof}[Proof of Proposition \ref{prop:scale}]
		Let $\{z_{n_k}\}$ be a convergent subsequence. It suffices to prove that $z_{n_k}\to-\rho$ as $n_k\to\infty$. By Proposition \ref{prop:preclassification},
		\begin{equation*}
			z_{n_k}\to -\wt\rho \quad \text{for some }\wt\rho>0\text{ as }n_k\to\infty.
		\end{equation*} 
		Applying Proposition~\ref{prop:endpoint-inverse} with $\widetilde\rho$ implies
		\begin{equation*}
			\operatorname{dist}_{H^1}(f_{n_k},\mathcal O_{\widetilde\rho})\to0\quad \text{as }n_k\to\infty.
		\end{equation*}
		Note that distinct scale orbits are positively separated. Indeed, suppose that, for some $\alpha\neq\beta$, there exist $\gamma_n,y_n\in\R$ such that
		\begin{equation}\label{eq:hypothetical-scale-orbit-coalescence}
			\|Q_\alpha-e^{i\gamma_n}Q_\beta(\cdot-y_n)\|_{H^1}\to0\quad\text{as }n\to\infty.
		\end{equation}
		The sequence $\{y_n\}$ is bounded; otherwise $Q_\beta(\cdot-y_n)\rightharpoonup0$ in $L^2$ along a subsequence, contradicting \eqref{eq:hypothetical-scale-orbit-coalescence}. After passing to a subsequence, $y_n\to y_*$ and $e^{i\gamma_n}\to e^{i\gamma_*}$. Then
		\begin{equation*}
			Q_\alpha=e^{i\gamma_*}Q_\beta(\cdot-y_*),
		\end{equation*}
		which implies $\alpha=\beta$, a contradiction. Therefore
		\begin{equation*}
			\operatorname{dist}_{H^1}(\mathcal O_\alpha,\mathcal O_\beta)>0\quad\text{for }\alpha\neq \beta.
		\end{equation*}
		Since $f_{n_k}\to Q_\rho$ in $H^1(\R)$ as $n_k\to\infty$, we obtain $\rho=\wt\rho$. Hence, \eqref{eq:endpoint-scale-selection} follows.
	\end{proof}
	
	\vspace{1cm}
	\section{Proof of Theorem \ref{thm:main}}\label{sec:proof-main-theorem}
	\vspace{0.5cm}
	
	In the notation introduced in Section~\ref{sec:spectral-analysis}, Theorem \ref{thm:main} is equivalent to the following theorem: 
	\begin{thm}\label{thm:full}
		Let $\rho>0$, and let $\{q_{0,n}\}_{n\geq 1} \subset H^1(\R)$ satisfy
		\begin{equation*}
			\operatorname{dist}_{H^1}(q_{0,n},\mathcal O_\rho)\to0 \quad\text{as }n\to\infty.
		\end{equation*}
		Let $q_n(t)$ be the solution of \eqref{eq:dnls-equation} with initial data $q_{0,n}$. Then
		\begin{equation}\label{eq:uniform-orbital-convergence}
			\sup_{t\in\R}\operatorname{dist}_{H^1}(q_n(t),\mathcal O_\rho)\to0\quad\text{as }n\to\infty.
		\end{equation}
	\end{thm}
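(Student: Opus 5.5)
We first prove Theorem~\ref{thm:full} for Schwartz data, and then pass to general $H^1(\R)$ data by approximation. Both steps use the symmetry reduction: since $\operatorname{dist}_{H^1}(\cdot,\mathcal O_\rho)$ and the DNLS flow commute with phase rotations and translations, we may replace $q_{0,n}$ by $e^{i\gamma_n}q_{0,n}(\cdot+y_n)$. Then $q_{0,n}\to Q_\rho$ in $H^1(\R)$.

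\emph{Schwartz case.} Assume $q_{0,n}\in\Sch(\R)$ and $q_{0,n}\to Q_\rho$ in $H^1$. By Proposition~\ref{prop:h1-well-posedness}, $q_n\in C^1(\R;\Sch(\R))$. For large $n$, let $z_n$ be the unique zero of $a(\cdot;q_{0,n})$ in $\Sigma_{\rm II}$ given by Lemma~\ref{lem:unique-zero}. Proposition~\ref{prop:scale} gives $z_n\to-\rho$ and $\mathbf B_{z_n}(q_{0,n})\to0$ in $H^1$. By Lemma~\ref{lem:perturbation-determinant}(i), $a(z_n;q_n(t))=0$ for all $t$, so Lemma~\ref{lem:darboux} shows that $r_n(t):=\mathbf B_{z_n}(q_n(t))$ is a classical DNLS solution. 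Its initial datum is $r_n(0)=\mathbf B_{z_n}(q_{0,n})$.

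Mass and $\mathcal H_2$ are conserved for Schwartz solutions, so $M(r_n(t))=M(r_n(0))\to0$ and $\mathcal H_2(r_n(t))=\mathcal H_2(r_n(0))\to0$, uniformly in $t$. The small-mass coercivity \eqref{eq:small-mass-hamiltonian-coercivity} is a pointwise estimate in the function and requires only small mass. Applied at each time, it gives $\sup_t\|\partial_xr_n(t)\|_{L^2}^2\le 2\sup_t\mathcal H_2(r_n(t))\to0$. Hence $\sup_t\|r_n(t)\|_{H^1}\to0$.

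Suppose \eqref{eq:uniform-orbital-convergence} failed. Then there would be $\varepsilon>0$, a subsequence, and times $t_n$ with $\operatorname{dist}_{H^1}(q_n(t_n),\mathcal O_\rho)\ge\varepsilon$. Apply Proposition~\ref{prop:endpoint-inverse} to $f_n:=q_n(t_n)\in\Sch$, with the zero $z_n\to-\rho$ and $\mathbf B_{z_n}(f_n)=r_n(t_n)\to0$ in $H^1$. It yields $\operatorname{dist}_{H^1}(q_n(t_n),\mathcal O_\rho)\to0$, a contradiction. The key point is that the zero is time independent, so the limiting scale $\rho$ is fixed once and for all by Proposition~\ref{prop:scale} at $t=0$. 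No scaling modulation is needed.

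\emph{General $H^1$ case.} Suppose again that the theorem fails: there are $\varepsilon>0$, a subsequence, and times $t_n$ with $\operatorname{dist}_{H^1}(q_n(t_n),\mathcal O_\rho)\ge\varepsilon$, where $q_{0,n}\to Q_\rho$ in $H^1$. For each $n$, continuity of the data-to-solution map on $[-|t_n|,|t_n|]$ (Proposition~\ref{prop:h1-well-posedness}) and density of $\Sch$ give $\tilde q_{0,n}\in\Sch$ with $\|\tilde q_{0,n}-q_{0,n}\|_{H^1}<1/n$ and $\|\tilde q_n(t_n)-q_n(t_n)\|_{H^1}<\varepsilon/2$. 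Then $\tilde q_{0,n}\to Q_\rho$ in $H^1$, so the Schwartz case gives $\operatorname{dist}_{H^1}(\tilde q_n(t_n),\mathcal O_\rho)\to0$. This contradicts the triangle inequality for large $n$.

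The main obstacle is essentially already handled by Proposition~\ref{prop:endpoint-inverse}. What remains to check is that the conservation of $\mathcal H_2$ along the Schwartz flow, and its invariance under the symmetry reduction, are legitimate. Both follow from the trace expansion of $\log a$, together with the time independence of $a$ and the invariance of $a$ under phase rotation and translation.
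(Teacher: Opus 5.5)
Your proposal is correct and follows the paper's proof essentially step for step. The Schwartz case is the paper's Proposition~\ref{prop:schwartz}: the zero is selected at $t=0$ via Proposition~\ref{prop:scale}, Darboux covariance is applied, conservation of $M$ and $\mathcal H_2$ combined with small-mass coercivity keeps $r_n(t)$ small, and Proposition~\ref{prop:endpoint-inverse} is applied at the bad times; the general $H^1$ case is the same Schwartz-approximation contradiction argument, using continuous dependence on $[-|t_n|,|t_n|]$. The one detail you leave implicit, which the paper states, is that $r_n(t)$ coincides by uniqueness with the $C^1(\R;\Sch(\R))$ solution launched from $r_n(0)$, and this is what justifies using the conservation laws.
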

	
	We first consider the Schwartz data and then extend to the entire energy space $H^1(\R)$.
	\begin{prop}[Schwartz stability]\label{prop:schwartz}
		Let $f_{0,n}\in\Sch(\R)$ and $f_{0,n}\to Q_\rho$ in $H^1(\R)$ as $n\to\infty$. Let $f_n(t)$ be the solution of \eqref{eq:dnls-equation} with initial data $f_{0,n}$. Then
		\begin{equation}\label{eq:uniform-schwartz-orbital-convergence}
			\sup_{t\in\R}\operatorname{dist}_{H^1}(f_n(t),\mathcal O_\rho)\to0\quad\text{as }n\to\infty.
		\end{equation}
	\end{prop}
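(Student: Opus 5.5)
We argue by contradiction and reduce to Proposition~\ref{prop:endpoint-inverse} applied along arbitrary sequences of times. Suppose \eqref{eq:uniform-schwartz-orbital-convergence} fails. Then, after passing to a subsequence, there are $\varepsilon_0>0$ and times $t_n\in\R$ with
\begin{equation*}
\operatorname{dist}_{H^1}(f_n(t_n),\mathcal O_\rho)\geq\varepsilon_0 .
\end{equation*}
By Proposition~\ref{prop:h1-well-posedness}, each $f_n\in C^1(\R;\Sch(\R))$. For $n$ large, Lemma~\ref{lem:unique-zero} gives a unique zero $z_n\in\Sigma_{\rm II}$ of $a(\cdot;f_{0,n})$. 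Proposition~\ref{prop:scale} then gives $z_n\to-\rho$ and $r_{0,n}:=\mathbf B_{z_n}(f_{0,n})\to0$ in $H^1(\R)$.

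By Lemma~\ref{lem:perturbation-determinant}(i), $a(z_n;f_n(t))=0$ for all $t$, so $z_n$ stays a zero along the flow. Lemma~\ref{lem:darboux} then shows that $r_n(t):=\mathbf B_{z_n}(f_n(t))$ is a classical DNLS solution. Its initial value is $r_n(0)=r_{0,n}$, since the B\"acklund transform is independent of the eigenfunction normalization and the eigenfunction is unique up to constants by Lemma~\ref{lem:perturbation-determinant}(iii).

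The key step is uniform-in-time smallness of $r_n(t)$ in $H^1$.
\begin{itemize}
\item \emph{Mass.} It is conserved, so $M(r_n(t))=M(r_{0,n})\to0$. Alternatively, use \eqref{eq:backlund-mass-shift} with conservation of $M(f_n)$.
\item \emph{Second Hamiltonian.} By \eqref{eq:backlund-hamiltonian-shift}, $\mathcal H_2(r_n(t))=\mathcal H_2(f_n(t))-8\im(z_n^2)$. Both terms on the right are time independent: $\mathcal H_2$ is conserved for Schwartz solutions, which can also be read off from the invariance of $a$ through the asymptotic expansion used in the proof of Lemma~\ref{lem:backlund-identities}. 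Hence $\mathcal H_2(r_n(t))=\mathcal H_2(r_{0,n})\to0$.
\item \emph{Coercivity.} The small-mass estimate \eqref{eq:small-mass-hamiltonian-coercivity}, valid once $M(r_n(t))$ is below a fixed threshold, gives $\tfrac12\|\partial_xr_n(t)\|_{L^2}^2\leq\mathcal H_2(r_n(t))$ for all $t$ and all large $n$.
\end{itemize}
Together these give $\sup_t\|r_n(t)\|_{H^1}\to0$.

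Now take the sequence $g_n:=f_n(t_n)\in\Sch(\R)$. Each $z_n$ is a zero of $a(\cdot;g_n)$, $z_n\to-\rho$, and $\mathbf B_{z_n}(g_n)=r_n(t_n)\to0$ in $H^1$. Proposition~\ref{prop:endpoint-inverse} with $\wt\rho=\rho$ yields $\operatorname{dist}_{H^1}(g_n,\mathcal O_\rho)\to0$, contradicting the choice of $t_n$.

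The main obstacle I expect is the coercivity step. The estimate \eqref{eq:small-mass-hamiltonian-coercivity} was derived with a constant depending only on the mass, via Gagliardo--Nirenberg: $\|r\|_6^3\lesssim M(r)\|r_x\|_2^{2}$ and $\|r\|_6^6\lesssim M(r)^2\|r_x\|_2^2$. So it applies uniformly in $t$ once $\sup_tM(r_n(t))$ is small, which holds by exact conservation. The remaining technical points are checking the hypotheses of Lemma~\ref{lem:darboux} and the conservation of $\mathcal H_2$ for Schwartz solutions; both are standard.
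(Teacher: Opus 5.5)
Your proposal is correct and follows essentially the same route as the paper: a contradiction argument along bad times $t_n$, Proposition~\ref{prop:scale} at $t=0$, then Darboux covariance together with conservation of mass and $\mathcal H_2$ and the small-mass coercivity to keep $\mathbf B_{z_n}(f_n(t))$ small in $H^1$ uniformly in time, and finally Proposition~\ref{prop:endpoint-inverse} at the times $t_n$. The paper instead defines $r_n(t)$ as the DNLS solution from $r_{0,n}$ and identifies it with $\mathbf B_{z_n}(f_n(t))$ by uniqueness, which amounts to the same thing. One small slip: the Gagliardo--Nirenberg bound should read $\|r\|_{L^6}^3\lesssim M(r)\|r_x\|_{L^2}$, so that $\|r\|_{L^6}^3\|r_x\|_{L^2}\lesssim M(r)\|r_x\|_{L^2}^2$; the version you wrote is not scale-invariant and is false.
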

	
	\begin{proof}
		We argue by contradiction. Suppose that there exist $\varepsilon_0>0$, a subsequence $\{n_k\}$, and times $t_{n_k}\in \R$ such that 
		\begin{equation}\label{eq:contradict}
			\operatorname{dist}_{H^1}(f_{n_k}(t_{n_k}),\mathcal O_\rho)\geq \varepsilon_0.
		\end{equation}
		
		Let $z_{n_k}$ be the unique simple zero of $a(z;f_{0,n_k})$ in $\Sigma_{\rm II}$ and set
		\begin{equation*}
			r_{0,n_k}:=\mathbf B_{z_{n_k}}(f_{0,n_k}).
		\end{equation*}
		It follows from Proposition~\ref{prop:scale} that
		\begin{equation*}
			z_{n_k}\to-\rho,\quad r_{0,n_k}\to0\text{ in }H^1(\R)\text{ as }n_k\to\infty.
		\end{equation*}
		As in \eqref{eq:small-mass-hamiltonian-coercivity}, for sufficiently large $n_k$, we have
		\begin{equation*}
			\frac12\|\partial_x r_{0,n_k}\|_{L^2}^2 \leq \mathcal H_2(r_{0,n_k})\leq\frac32\|\partial_x r_{0,n_k}\|_{L^2}^2.
		\end{equation*}
		Thus,
		\begin{equation*}
			\mathcal H_2(r_{0,n_k})\to 0 \quad\text{as }n_k\to\infty.
		\end{equation*}
		
		Let $r_{n_k}(t)$ be the solution of \eqref{eq:dnls-equation} with initial data $r_{0,n_k}$. By Lemma \ref{lem:darboux} and uniqueness,
		\begin{equation*}
			r_{n_k}(t)=\mathbf B_{z_{n_k}}(f_{n_k}(t)).
		\end{equation*}
		By mass conservation,
		\begin{equation*}
			M(r_{n_k}(t))=M(r_{0,n_k})\to 0\quad \text{as }n_k\to\infty.
		\end{equation*}
		As in \eqref{eq:small-mass-hamiltonian-coercivity}, mass conservation and $\mathcal H_2$ conservation imply that, for sufficiently large $n_k$,
		\begin{equation*}
			\sup_{t\in\R}\|\partial_x r_{n_k}(t)\|_{L^2}^2\leq 2\sup_{t\in\R}\mathcal H_2(r_{n_k}(t)) = 2\mathcal H_2(r_{0,n_k})\to0\quad\text{as }n_k\to\infty.
		\end{equation*}
		By Proposition~\ref{prop:endpoint-inverse}, 
		\begin{equation*}
			\operatorname{dist}_{H^1}(f_{n_k}(t_{n_k}),\mathcal O_\rho)\to0\quad\text{as }n_k\to\infty.
		\end{equation*}
		This contradicts \eqref{eq:contradict}.
	\end{proof}

	\begin{proof}[Proof of Theorem \ref{thm:full}]
		We argue by contradiction. Suppose that there exist $\varepsilon_0>0$, a subsequence $\{n_k\}$, and times $t_{n_k}\in \R$ such that 
		\begin{equation}\label{eq:thm-contradict}
			\operatorname{dist}_{H^1}(q_{n_k}(t_{n_k}),\mathcal O_\rho)\geq \varepsilon_0.
		\end{equation}
		
		After applying a phase rotation and a translation to $q_{0,n_k}$, we denote the resulting initial datum by $\wt q_{0,n_k}$ and may assume that $\wt q_{0,n_k}\to Q_\rho$ in $H^1(\R)$ as $n_k\to\infty$. Let $\wt q_{n_k}(t)$ be the corresponding solution. Then
		\begin{equation}\label{eq:thm-contradict-1}
			\operatorname{dist}_{H^1}(\wt q_{n_k}(t_{n_k}),\mathcal O_\rho)\geq \varepsilon_0.
		\end{equation}
		Set $T_{n_k}=|t_{n_k}|+1$. By Proposition \ref{prop:h1-well-posedness}, we can choose $f_{0,n_k}\in \mathcal{S}(\R)$ depending on $T_{n_k}$ such that 
		\begin{equation*}
			\|f_{0,n_k}-\wt q_{0,n_k}\|_{H^1}<\frac{1}{n_k},\quad \text{and} \quad \sup_{|t|\leq T_{n_k}}\|f_{n_k}(t)-\wt q_{n_k}(t)\|_{H^1}<\frac{1}{n_k},
		\end{equation*}
		where $f_{n_k}(t)$ is the solution of \eqref{eq:dnls-equation} with initial data $f_{0,n_k}$. Thus, it follows from Proposition \ref{prop:schwartz} that
		\begin{equation*}
			\operatorname{dist}_{H^1}(\wt q_{n_k}(t_{n_k}),\mathcal O_\rho) \leq \operatorname{dist}_{H^1}(f_{n_k}(t_{n_k}),\mathcal O_\rho) + \|f_{n_k}(t_{n_k})-\wt q_{n_k}(t_{n_k})\|_{H^1}\to 0
		\end{equation*}
		as $n_k\to\infty$, which contradicts \eqref{eq:thm-contradict-1}.
	\end{proof}

\end{document}